\numberwithin{equation}{section}
\newtheorem{thm}{Theorem}
\newtheorem{lemma}[thm]{Lemma}
\newtheorem{cor}[thm]{Corollary}
\newtheorem{rmk}[thm]{Remark}
\newtheorem{prop}[thm]{Proposition}
\theoremstyle{definition}
\newtheorem{example}[thm]{Example}
\begin{document}
\title[nowhere differentiable functions of the Generalized Takagi Class]{A characterization  of the nowhere differentiable functions of the Generalized Takagi Class}

\date{\today\ (\the\time)}

\author{Juan Ferrera}
\address{IMI, Departamento de An{\'a}lisis Matem{\'a}tico y Matem{\'a}tica Aplicada,
Facultad Ciencias Matem{\'a}ticas, Universidad Complutense, 28040, Madrid, Spain}
\email{ferrera@mat.ucm.es}

\author{Javier G\'omez Gil}
\address{Departamento de An{\'a}lisis Matem{\'a}tico y Matem{\'a}tica Aplicada,
Facultad Ciencias Matem{\'a}ticas, Universidad Complutense, 28040, Madrid, Spain}
\email{gomezgil@mat.ucm.es}

\author{Jes\'us Llorente}
\address{Departamento de An{\'a}lisis Matem{\'a}tico y Matem{\'a}tica Aplicada,
Facultad Ciencias Matem{\'a}ticas, Universidad Complutense, 28040, Madrid, Spain}
\email{jesllore@ucm.es}

\keywords{Generalized Takagi class, differentiability, weights not in $c_0$.}

\begin{abstract}
In this paper, we prove that for some Generalized Takagi Classes, in particular for the
Takagi-Van der Waerden Class, the functions are nowhere differentiable if, and only if, the sequence of weights 
does not belong to $c_0$.
\end{abstract}

\maketitle

\section{Introduction}

The Takagi function introduced in \cite{T} provides an example of a continuous nowhere differentiable function
(see \cite{AK} or \cite{L} for a wide introduction to this funciton).
It is usually defined by 
$$
T(x)=\sum_{n=0}^{\infty}\frac{\phi(2^nx)}{2^n}, \quad x\in [0,1],
$$
where $\phi(x)$ denotes the distance from the point $x$ to the nearest integer. Although it is simpler than the celebrated Weierstrass
function, it is more regular than that one, in the sense that it may have infinite lateral derivatives (see 
\cite{AK1} for a study of this matter).

From the subdifferential viewpoint, the Takagi function is also interesting  since it is extremal in the following
sense: it has empty subdifferential except for a countable set, where its subdifferential is $\mathbb{R}$
(see \cite{G}).

Recently, the first two authors, see \cite{FGG}, have generalized the Takagi function to separable Hilbert spaces, considering arbitrary  
countable dense subsets $D$. More precisely, for the one dimensional case, they consider an increasing sequence $\mathcal{D}=(D_n)_n$ of finite subsets of $[0,1]$ satisfying that $D=\cup_n D_n$ and $dist(z,D_n)\leq \alpha _n$ for every $z\in [0,1]$, where $\alpha =(\alpha _n)_n\in \ell ^1$. They define the Takagi function $T_{\mathcal{D}}:[0,1]\to\mathbb{R}$ associated to $\mathcal{D}$ as
$$
T_{\mathcal{D}}(x)=\sum_{n=0}^{\infty}g_n(x)
$$
where $g_n(x)$ denotes the distance from $x$ to the set $D_n$. They prove that these functions share with the
Takagi function the following property: the subdifferential of $T_{\mathcal{D}}$ is $\mathbb{R}$ at every point $x\in D$, meanwhile it is empty provided that $x\notin D$ (see \cite{FGG}).

From the Takagi function, Hata and Yamaguti \cite{HY} introduced the Takagi Class consisting in the functions
of the form
$$
f_w(x)=\sum_{n=0}^{\infty}w_n\frac{\phi(2^nx)}{2^n},
$$
where the sequence $w=(w_n)_n$ satisfies that the series $\sum_n\frac{w_n}{2^n}$ converges.
They proved, among other results, that the Takagi Class is a closed subspace of the space of
continuous functions $C[0,1]$ endowed with the sup norm.

Kôno \cite{Kono} studied the differentiability of the functions of the Takagi Class, proving in particular that
$f_w$ is nowhere derivable if and only if $w\notin c_0$.

We may generalize the Takagi Class in the following way: we consider a countable dense subset $D$ of $[0,1]$ and we assume that $0,1\in D$. Let $(D_n)_n$ be a decomposition of $D$, that is an increasing sequence of finite sets $D_n$ and a 
sequence $(\alpha _n)_n\in \ell ^1$ satisfying: 
\begin{enumerate}
  \item $0,1\in D_0$. 
  \item $D=\cup_{n=0}^{\infty}D_n$.
  \item $|x-y|\leq \alpha _n$ for every $x,y\in D_n$ such that $(x,y)\cap D_n =\emptyset$.
  \item There exists $\rho \in (0,1]$ such that $|x-y|\geq \rho \alpha _n$ for every $x,y\in D_n$,
  $x\neq y$.
\end{enumerate}
The Generalized Takagi Class 
is composed by the functions $T_w: [0,1]\to \mathbb{R}$ defined by
$$
T_w(x)=\sum_{n=0}^{\infty}w_ng_n(x)
$$
where $w=(w_n)_n$ satisfies that $(w_n\alpha _n)_n\in \ell ^1$.

This generalization was introduced in \cite{FGG1}, where the authors prove that, under mild conditions on the 
decomposition, the Generalized Takagi Class is a subspace of $C[0,1]$ isomorphic to $\ell ^1$. They also
study the differentiability of the functions of the Class when the sequence of weights $w$  belongs
to $c_0$, generalizing the Kôno's results in particular.

We introduce some notation. We denote by $\mathcal{F}_n$ the family of connected components of 
$[0,1]\smallsetminus D_n$, and by $\widetilde{D}_n$ the set of midpoints of the intervals $I\in \mathcal{F}_n$. Finally, $\widetilde{D}$ will denote the union of the sets $\widetilde{D}_n$.

The aim of this paper is to prove that under some restrictions on the decomposition, the functions of the class 
are nowhere derivable provided that $w\notin c_0$. More precisely, we will prove the following Main Theorem.
\begin{thm}\label{main}
If the decomposition satisfies  for every $n$ that either
\begin{enumerate}
  \item  $\widetilde{D}_n\subset D_{n+1}$, or 
    \item  $I\cap D_{n+1}\neq \emptyset$ for every $I\in \mathcal{F}_n$ and $\widetilde{D}_n\subset \widetilde{D}_{n+1}$,
\end{enumerate}
then $T_w$ is nowhere differentiable if and only if $w\notin c_0$.
\end{thm}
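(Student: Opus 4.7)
The plan is to prove the non-trivial direction: assuming $w\notin c_0$, show $T_w$ is not differentiable at any $x\in[0,1]$ (the reverse implication is already part of \cite{FGG1}). Fix $x$ and, for each $n$, let $I_n=(a_n,b_n)\in\mathcal{F}_n$ be the component with $x\in\overline{I_n}$ and $m_n=(a_n+b_n)/2$ its midpoint. If $T_w$ is differentiable at $x$ with $T_w'(x)=L$, then since $a_n\le x\le b_n$ and $|I_n|\to 0$, the straddling secant $S_n:=(T_w(b_n)-T_w(a_n))/|I_n|$ tends to $L$, so $S_{n+1}-S_n\to 0$. I want to read off $w_n$, up to a controllable error, from this difference. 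Expanding $T_w=\sum_k w_k g_k$ reduces the problem to understanding each tent $g_k$ on the level-$k$ component $J_{k,n}\supset I_n$, whose only non-smooth point is the vertex $(c_k+d_k)/2\in\widetilde{D}_k$.

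Under condition~(1), for $k<n$ the vertex $(c_k+d_k)/2\in\widetilde{D}_k\subset D_{k+1}\subset D_n$ lies outside the open interval $(a_n,b_n)$, so $g_k$ is affine on $[a_n,b_n]$ with slope $\varepsilon_k^{(n)}\in\{-1,+1\}$; for $k\ge n$ both $a_n,b_n\in D_n\subset D_k$ force $g_k(a_n)=g_k(b_n)=0$. Hence $S_n=\sum_{k<n}\varepsilon_k^{(n)}w_k$. Passing from $n$ to $n+1$ with $I_{n+1}\subset I_n$: for $k<n$ the vertex lies in $D_n$ and $I_{n+1}$ stays on the same side of it as $I_n$, so $\varepsilon_k^{(n+1)}=\varepsilon_k^{(n)}$; for $k=n$ the vertex is $m_n\in D_{n+1}$ by condition~(1), so $I_{n+1}$ lies on one side of $m_n$ and $\varepsilon_n^{(n+1)}\in\{-1,+1\}$. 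Therefore $S_{n+1}-S_n=\varepsilon_n^{(n+1)}w_n$, and $S_{n+1}-S_n\to 0$ forces $w_n\to 0$, contradicting $w\notin c_0$.

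Under condition~(2), the inclusion $\widetilde{D}_n\subset D_{n+1}$ can fail, so the vertex $(c_k+d_k)/2$ may lie inside $(a_n,b_n)$. But iterating $\widetilde{D}_k\subset\widetilde{D}_{k+1}$ gives $\widetilde{D}_k\subset\widetilde{D}_n$, and the only element of $\widetilde{D}_n$ lying in $I_n$ is $m_n$ itself, so this happens exactly when $(c_k+d_k)/2=m_n$, that is when $m_n\in\widetilde{D}_k$. Monotonicity makes this set a contiguous block $\{k_0(n),\dots,n-1\}$ with $k_0(n):=\min\{k:m_n\in\widetilde{D}_k\}$, and on that block $g_k$ is symmetric about $m_n$ and contributes $0$ to $T_w(b_n)-T_w(a_n)$. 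Thus $S_n=\sum_{k<k_0(n)}\varepsilon_k^{(n)}w_k$, and $S_{n+1}-S_n$ is no longer simply $\pm w_n$. To recover a single weight I would compare $S_n$ with the slope over the level-$(n+1)$ sub-interval $I^{(n+1)}\subset I_n$ having $m_n$ as midpoint --- which exists because $m_n\in\widetilde{D}_{n+1}$ and $I^{(n+1)}\subsetneq I_n$ by the populated-intervals half of condition~(2) --- using that $k_0(n)$ is common to both slopes, and controlling the new tail $\sum_{k\ge n+1}w_k g_k(m_n)$ via $g_k(m_n)\le\alpha_k/2$ together with $(w_k\alpha_k)\in\ell^1$ and the lower bound $|I_n|\ge\rho\alpha_n$ from the decomposition axioms.

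The main obstacle is the condition~(2) case: the persistence of midpoints causes an entire block $w_{k_0(n)},\dots,w_{n-1}$ to drop out of $S_n$, and the residual tail at $k>n$ no longer vanishes, so a one-level jump $S_{n+1}-S_n$ does not expose $w_n$ directly. Arranging the two-level comparison so that exactly one weight survives in the difference while the tail stays small is the delicate core of the argument.
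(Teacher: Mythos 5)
Your condition-(1) analysis is essentially correct and gives a clean telescoping $S_{n+1}-S_n=\pm w_n$ (close in spirit to K\^ono's original argument), but the proof stops exactly where the difficulty of this theorem lies: the condition-(2) case is only sketched, and you acknowledge it as ``the delicate core.'' The paper resolves that case not by a two-level comparison of straddling secants but by switching to difference quotients between \emph{consecutive points of $D$ on one side of $x$}. For $x\in D$ it shows that $\frac{T_w(y_n)-T_w(x)}{y_n-x}=\sum_{k=1}^{n-1}w_k$ exactly, because a midpoint of $(x,y_k)$ falling inside $(x,y_n)$ would, under hypothesis (2), force $y_{k+1}\in(x,y_n)$, a contradiction. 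For $x\in\widetilde{D}\smallsetminus D$, and for $x\notin D\cup\widetilde{D}$ with an intruding persistent midpoint $c_k=\dots=c_n$, it uses $\frac{T_w(b_n)-T_w(b_{n-1})}{b_n-b_{n-1}}$ or $\frac{T_w(a_n)-T_w(a_{n-1})}{a_n-a_{n-1}}$, each of which equals $\sum_{k<n}w_kg_k'(x)$ with \emph{no tail at all} (since $b_{n-1},b_n\in D_k$ for every $k\geq n$), and then Lemma \ref{lema1}(2)--(3) identifies the limit with $T_w'(x)$, forcing $\sum w_kg_k'(x)$ to converge and hence $w\in c_0$.

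Two concrete problems with your sketch for case (2). First, the comparison interval $I^{(n+1)}$ you propose is centered at $m_n$, not at $x$; nothing guarantees $x\in\overline{I^{(n+1)}}$, so its secant has no a priori relation to $T_w'(x)$ and the difference you want to form need not tend to $0$. Second, the tail bound $\sum_{k\geq n+1}|w_k|\,g_k(m_n)\leq\frac12\sum_{k\geq n+1}|w_k|\alpha_k$ is $o(1)$ but not $o(\alpha_{n+1})$ in general (take $\alpha_k=2^{-k}$, $w_k=1$), so after dividing by an interval of length comparable to $\alpha_{n+1}$ the error is bounded but not negligible and would swamp the single weight you want to extract; the paper avoids this entirely by only ever evaluating $T_w$ at points of $D_n$, where every term with $k\geq n$ vanishes identically. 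A further, smaller issue: the hypothesis allows (1) and (2) to alternate with $n$, so even when (1) holds at level $n$ a midpoint created at an earlier (2)-level can persist inside $(a_n,b_n)$ and break the identity $S_{n+1}-S_n=\pm w_n$; your two cases cannot be treated as globally exclusive scenarios.
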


As in \cite{FGG1} it is proved that, under more general restrictions on the decomposition, $T_w$ is derivable at an
uncountable set of points provided that $w\in c_0$, this Main Theorem will follow from Theorem \ref{rpar} below.

The next example of a Generalized  Takagi Class, named Takagi-Van der Waerden Class, shows how wide is the range
of the result that we present in this paper.

\begin{example}\label{ejemplo1}
Let $r\geq 2$ be an integer. The Takagi-Van der Waerden function $f_r:[0,1]\to \mathbb{R}$ is defined as follows
$$
f_r(x)=\sum_{n=0}^{\infty} \frac{1}{r^n}\phi(r^n x).
$$
Let us observe that $f_2$ is the Takagi function (see \cite{T}) and $f_{10}$ is the Van der Waerden function (see \cite{Waerden}). The Takagi-Van der Waerden function is nowhere differentiable (see \cite{A} or \cite{BE}, for instance) and the set of points at which $f_r$ has an infinite derivative is characterized in \cite{FGGLL}.

In this case, it is clear that $D_n = \{ kr^{-n}\in [0,1]:k\in\mathbb{Z}\}$, $\alpha_n = \frac{1}{r^{n}}$ and $\rho=1$. When $r$ is even we have the situation $\widetilde{D}_n\subset D_{n+1}$, meanwhile the situacion $\widetilde{D}_n\subset \widetilde{D}_{n+1}$ arises when $r$ is odd. 

The Takagi-Van der Waerden Class is an immediate generalization of the Takagi Class. It is formed by  all the functions $f_{r,w}:[0,1]\to\mathbb{R}$ defined by
$$
f_{r,w}(x)=\sum_{n=0}^{\infty}\frac{w_n}{r^n}\phi(r^nx)
$$
where $w$ satisfies that $\sum_{n=0}^{\infty}\frac{w_n}{r^n}<+\infty$. As a consequence of Theorem \ref{main},
we have that $f_{r,w}$ is nowhere differentiable 
whenever $w\not\in c_0$.
\end{example}

We proceed to introduce some definitions and results on nonsmooth analysis. For a lower semicontinuous
function $f:\mathbb{R}\to \mathbb{R}$ we define the subdifferential of $f$ at $x$, $\partial f(x)$, as the set
of real numbers $c$ that satisfy 
$$
\liminf_{h\to 0}\frac{f(x+h)-f(x)-ch}{|h|}\geq 0.
$$ 
If we consider the Dini derivatives,
$$
D^-f(x)=\limsup_{t\to 0^-}\frac{f(x+t)-f(x)}{t} \quad\text{and}\quad
d_+f(x)=\liminf_{t\to 0^+}\frac{f(x+t)-f(x)}{t},
$$
then
$$
\partial f(x)=[D^-f(x),d_+f(x)]\cap \mathbb{R}.
$$
The superdifferential $\partial ^+f(x)$ of an upper semicontinuous function $f$ at $x$,
may be defined by the following formula:
$$
\partial ^+f(x)=-\partial (-f)(x).
$$
A function $f$ is derivable at $x$ if and only if $\partial f(x)=\partial ^+f(x)\neq \emptyset$. 
If this is the case, then $\partial f(x)=\{ f'(x)\}$.
For a quick introduction to these concepts see \cite{F}.

We end this introduction with a lemma that is probably
well known, however we prove it for the sake of selfcontainess.

\begin{lemma}\label{lema1}
Let $I\subset \mathbb{R}$ be an open interval, 
$f:I\to \mathbb{R}$ a function, and $x\in I$.
\begin{enumerate}
  \item Let $a_n<x<b_n$ for every $n$. Assume that $\lim_na_n=\lim_nb_n=x$.
If $f$ is derivable at $x$, then
$$
\lim_n \frac{f(b_n)-f(a_n)}{b_n-a_n}=f'(x).
$$
  \item  Let $x<u_n<v_n$ for every $n$.
  Assume that  $\lim_nv_n=x$, and $\limsup_n\frac{u_n-x}{v_n-u_n}<+\infty$.
  If $f$ is right derivable at $x$, then
$$
\lim_n \frac{f(v_n)-f(u_n)}{v_n-u_n}=f'^+(x).
$$
  \item Let  $u_n<v_n<x$ for every $n$. Assume that
  $\lim_nu_n=x$, and $\limsup_n\frac{x-u_n}{v_n-u_n}<+\infty$.
  If $f$ is left derivable at $x$, then
$$
\lim_n \frac{f(v_n)-f(u_n)}{v_n-u_n}=f'^-(x).
$$
\end{enumerate}
\end{lemma}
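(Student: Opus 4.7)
The plan is to write each slope $\frac{f(v)-f(u)}{v-u}$ as an affine combination of two one-sided difference quotients at $x$, both of which are known to converge, and with coefficients that stay bounded. The hypotheses in each item are precisely what is needed to control these coefficients.

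For (1), I would use the identity
$$
\frac{f(b_n)-f(a_n)}{b_n-a_n} = \frac{b_n-x}{b_n-a_n}\cdot\frac{f(b_n)-f(x)}{b_n-x} + \frac{x-a_n}{b_n-a_n}\cdot\frac{f(x)-f(a_n)}{x-a_n}.
$$
Since $a_n<x<b_n$, the two weights are nonnegative and sum to $1$, so this is a convex combination. Because $f$ is derivable at $x$ and $a_n\to x^-$, $b_n\to x^+$, both difference quotients tend to $f'(x)$, and hence so does the combination.

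For (2), set $\lambda_n=\frac{u_n-x}{v_n-u_n}$, which is nonnegative and bounded above by hypothesis. A direct computation gives
$$
\frac{f(v_n)-f(u_n)}{v_n-u_n} = (1+\lambda_n)\,\frac{f(v_n)-f(x)}{v_n-x} - \lambda_n\,\frac{f(u_n)-f(x)}{u_n-x}.
$$
Since $0<u_n-x<v_n-x$ and $v_n\to x$, we have $u_n\to x^+$ as well, so both difference quotients converge to $f'^+(x)$. Writing the right-hand side as $f'^+(x)+(1+\lambda_n)\bigl(\tfrac{f(v_n)-f(x)}{v_n-x}-f'^+(x)\bigr)-\lambda_n\bigl(\tfrac{f(u_n)-f(x)}{u_n-x}-f'^+(x)\bigr)$, the boundedness of $(\lambda_n)$ forces convergence to $f'^+(x)$. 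Part (3) follows from (2) applied to $g(t)=f(-t)$ at the point $-x$ (or by the analogous identity rewritten with the obvious sign changes).

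The only real subtlety is noticing that in (2) and (3) the hypothesis $\limsup\frac{u_n-x}{v_n-u_n}<+\infty$ (resp.\ $\limsup\frac{x-u_n}{v_n-u_n}<+\infty$) is exactly what is required to keep the coefficient $\lambda_n$ bounded; without this the unbounded coefficient in front of a vanishing error term could spoil convergence, which is why part (1) needs no such hypothesis (the coefficients there are automatically in $[0,1]$). Beyond this observation the argument is a routine algebraic manipulation.
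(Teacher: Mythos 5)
Your proof is correct and follows essentially the same route as the paper: both write the two-point slope as an affine combination of the one-sided difference quotients at $x$ (your identity $(1+\lambda_n)r_n-\lambda_n s_n$ is exactly the paper's $r_n+(r_n-s_n)\lambda_n$ regrouped), and both use the boundedness of the coefficient, automatic in $[0,1]$ for part (1) and supplied by the $\limsup$ hypothesis in parts (2) and (3). The reflection argument for (3) is a harmless variant of the paper's ``similar'' and the hypotheses do transfer correctly since $\frac{x-u_n}{v_n-u_n}=\frac{x-v_n}{v_n-u_n}+1$.
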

\begin{proof}
Let us prove $(1)$.
For every $n$ we denote
$$
r_n=\frac{f(b_n)-f(x)}{b_n-x} \quad \textrm{and} \quad
s_n=\frac{f(a_n)-f(x)}{a_n-x}.
$$
We have $\lim_nr_n=\lim_ns_n=f'(x)$. Since
\begin{align*}
&f(b_n)-f(a_n)=f(b_n)-f(x)+f(x)-f(a_n)\\
&=r_n(b_n-x)-s_n(a_n-x)=r_n(b_n-a_n)+(s_n-r_n)(x-a_n)
\end{align*}
we obtain
$$
\frac{f(b_n)-f(a_n)}{b_n-a_n}=r_n+(s_n-r_n)\frac{x-a_n}{b_n-a_n}.
$$
Taking limits we get the result. 

Let us prove $(2)$.
For every $n$ we denote
$$
r_n=\frac{f(v_n)-f(x)}{v_n-x} \quad \textrm{and} \quad
s_n=\frac{f(u_n)-f(x)}{u_n-x}.
$$
We have $\lim_nr_n=\lim_ns_n=f'^+(x)$. Since
\begin{align*}
&f(v_n)-f(u_n)=f(v_n)-f(x)+f(x)-f(u_n)\\
&=r_n(v_n-x)-s_n(u_n-x)=r_n(v_n-u_n)+(s_n-r_n)(x-u_n)
\end{align*}
we obtain
$$
\frac{f(v_n)-f(u_n)}{v_n-u_n}=r_n+(r_n-s_n)\frac{u_n-x}{v_n-u_n}.
$$
Taking limits we get the result. 

The proof of $(3)$ is similar. 
\end{proof}

\section{Non derivability on $D$}

If $x\in D$, then there exists $n_0\geq 1$ (we are assuming that $x\in (0,1)$ since the study 
of $T'^+_w(0)$ and $T'^-_w(1)$ is similar)
such that $x\in D_{n_0}$
but $x\notin D_{n_0-1}$. 
The function 
$$
G_{n_0-1}=\sum_{k=0}^{n_0-1}w_kg_k
$$ 
has lateral derivatives  at $x$: 
$$
G'^+_{n_0-1}(x)=\sum_{k=0}^{n_0-1}w_kg'^+_k(x) \quad \text{and} \quad G'^-_{n_0-1}(x)=\sum_{k=0}^{n_0-1}w_kg'^-_k(x).
$$
Consequently, in order 
to study the existence of lateral derivatives of $T_w$ we may assume that $x\in D_1$ and $w_0=0$.

Let $x\in D_1$, for every $n\geq 1$ we define
$y_n\in D_n$ satisfying that  $(x,y_n)\in \mathcal{F}_n$.
We will assume that the sequence $(y_n)_n$ is strictly decreasing. 
Observe that this holds when $I\cap D_{n+1}\neq \emptyset$ for every $I\in \mathcal{F}_n$.
This condition is not too restrictive, it means that
between two points of $D_n$ there is always a point of $D_{n+1}$. 
In particular, this condition holds in the interesting case
of Takagi Van der Waerden Class.

With the aim of extending Kôno's Theorem to the Takagi-Van der Waerden Class, we present an easy first 
result that covers this situation.

\begin{prop}\label{sencillo}
Assume that the decomposition satisfies that
for every $n$, either
\begin{enumerate}
  \item  $\widetilde{D}_n\subset D_{n+1}$, or 
  \item  $I\cap D_{n+1}\neq \emptyset$ for every $I\in \mathcal{F}_n$ and $\widetilde{D}_n\subset \widetilde{D}_{n+1}$.
\end{enumerate}
Then, $T_w$ has not finite lateral derivatives at $x\in D$ provided that $w\notin c_0$.
\end{prop}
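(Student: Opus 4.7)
The plan is to produce a closed-form expression for the right secant
$$
\frac{T_w(y_n)-T_w(x)}{y_n-x}=S_{n-1},\qquad\text{where }S_m:=\sum_{k=1}^{m}w_k,
$$
and to apply this identity along the sequences $(y_n)_n$ and $(y_{n+1})_n$ to force $w_n\to 0$ whenever a finite right derivative of $T_w$ exists at $x$. Throughout I work under the reduction already indicated in the text ($x\in D_1\cap(0,1)$, $w_0=0$, and $(y_n)_{n\geq 1}$ strictly decreasing); the non-existence of a finite left derivative is symmetric, via the analogous sequence $(y_n^-)$ of nearest $D_n$-points to the left of $x$.

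The geometric heart of the proof is the inequality $y_n\leq z_{n-1}$, where $z_{n-1}:=(x+y_{n-1})/2\in\widetilde{D}_{n-1}$ is the midpoint of $(x,y_{n-1})\in\mathcal{F}_{n-1}$. Under hypothesis (1) at level $n-1$, $z_{n-1}\in\widetilde{D}_{n-1}\subset D_n$ is itself a $D_n$-point lying strictly between $x$ and $y_{n-1}$, so $y_n\leq z_{n-1}$ because $y_n$ is the nearest $D_n$-point to the right of $x$. Under hypothesis (2) at level $n-1$, $z_{n-1}\in\widetilde{D}_n$ is the midpoint of some $[\tilde a,\tilde b]\in\mathcal{F}_n$; the strict decrease $y_n<y_{n-1}$ forbids $\tilde a=x$, so $\tilde a\in D_n$ satisfies $x<\tilde a<z_{n-1}$, giving again $y_n\leq\tilde a<z_{n-1}$. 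Monotonicity of $(z_k)_k$ in $k$ then propagates the inequality to $y_n\leq z_k$ for every $1\leq k\leq n-1$.

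With this in hand, $g_k(y_n)$ is determined level by level: for $k\geq n$, $y_n\in D_n\subset D_k$ forces $g_k(y_n)=0$, while for $1\leq k\leq n-1$ the function $g_k$ is affine of slope $+1$ on $[x,z_k]$ and $y_n\leq z_k$, so $g_k(y_n)=y_n-x$. Together with $T_w(x)=0$ (since $w_0=0$ and $g_k(x)=0$ for $k\geq 1$), this yields the displayed identity. Assume now for contradiction that $T_w$ is right derivable at $x$ with finite derivative $L$; since $y_n\to x^+$ and $y_{n+1}\to x^+$, the identity along each sequence gives $S_{n-1}\to L$ and $S_n\to L$, whence $w_n=S_n-S_{n-1}\to 0$, contradicting $w\notin c_0$.

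The main obstacle, mild in the end, is the dichotomy in the geometric step: the inequality $y_n\leq z_{n-1}$ must be argued separately under (1) and under (2), and in case (2) one needs the strict decrease of $(y_n)_n$ (which itself follows from either hypothesis, because a point of $(x,y_n)\cap D_{n+1}$ always sits strictly inside $(x,y_n)$) in order to exclude $\tilde a=x$. Everything else is just the elementary piecewise-linear structure of each $g_k$ between consecutive points of $D_k$.
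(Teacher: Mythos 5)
Your proposal is correct and follows essentially the same route as the paper: both reduce to $x\in D_1$, $w_0=0$, show via the midpoint geometry (split according to whether $\widetilde{D}_{n-1}\subset D_n$ or $\widetilde{D}_{n-1}\subset\widetilde{D}_n$) that $g_k(y_n)=y_n-x$ for $k<n$, deduce that the secant slopes equal the partial sums $\sum_{k=1}^{n-1}w_k$, and conclude $w_n\to 0$. Your write-up is somewhat more explicit about the inequality $y_n\leq z_{n-1}$, which the paper argues contrapositively, but the content is the same.
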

\begin{proof}
We may assume without loss of generality that $x\in D_1$ and $w_0=0$. It is immediate to see that
$$
\frac{T_w(y_n)-T_w(x)}{y_n-x}=\sum_{k=1}^{n-1}w_k\frac{g_k(y_n)}{y_n-x}=\sum_{k=1}^{n-1}w_k
$$
since $g_k(y_n)=y_n-x$ for every $k<n$. Indeed, if $g_k(y_n)= y_k -y_n$ for some $k<n$ then the midpoint of $(x,y_k)$ belongs to $(x,y_n)$ and consequently, we are necessarily under the second alternative. This implies that $y_{k+1}\in (x,y_n)$ which is a contradiction.
Hence, if $T'^+_w(x)$ exists then the series $\sum w_k$ converges which implies $w\in c_0$.
The proof for the left derivative is similar.
\end{proof}

We set the following lemmas in order to obtain a different kind of conditions.

\begin{lemma}\label{lema2}
Assume that the decomposition satisfies
$\alpha _{n+1}\leq \frac{\rho}{1-\rho} \alpha _n$
for every $n$.
If $x\in D_1$,  $w_0=0$, and $y_{n+1}<y_n$ for every $n$, then  
$$
\Delta _n:=\frac{T_w(y_n)-T_w(x)}{y_n-x}=\sum_{k=1}^{n-2}w_k+\delta _{n-1}w_{n-1}
$$
with $\delta _n\in [\rho ,1]$ for every $n$.
\end{lemma}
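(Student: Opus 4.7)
The plan is to expand $T_w(y_n)-T_w(x)=\sum_{k\ge 0}w_k\bigl(g_k(y_n)-g_k(x)\bigr)$ and identify the ratio $g_k(y_n)/(y_n-x)$ for each $k$. Since $x\in D_1\subset D_k$ for every $k\ge 1$, one has $g_k(x)=0$ for such $k$; combined with $w_0=0$ this removes the $k=0$ term. At the other end, $y_n\in D_n\subset D_k$ for $k\ge n$ makes the tail vanish. Thus it suffices to analyse $g_k(y_n)$ for $1\le k\le n-1$, and the target is to show $g_k(y_n)=y_n-x$ for $1\le k\le n-2$ and $g_{n-1}(y_n)/(y_n-x)\in[\rho,1]$.

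A preliminary observation is that $y_n\notin D_{n-1}$: otherwise $x$ and $y_n$ would be consecutive in $D_{n-1}$, forcing $y_{n-1}=y_n$ and contradicting the strict decrease of $(y_n)_n$. Hence $y_n\notin D_k$ for every $k\le n-1$. Because $(x,y_n)\cap D_n=\emptyset$ and $D_k\subset D_n$, the left $D_k$-neighbour of $y_n$ is $x$. Let $z_k$ be its right $D_k$-neighbour; the same reasoning shows $z_k$ is also the right $D_k$-neighbour of $x$, and $g_k(y_n)=\min(y_n-x,\,z_k-y_n)$.

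For $k=n-1$ one has $z_{n-1}=y_{n-1}$. Condition (4) in $D_n$ gives $y_{n-1}-y_n\ge\rho\alpha_n$, while condition (3) in $D_n$ gives $y_n-x\le\alpha_n$. Combining, $g_{n-1}(y_n)\ge\min(y_n-x,\rho\alpha_n)\ge\rho(y_n-x)$, while trivially $g_{n-1}(y_n)\le y_n-x$. Hence $\delta_{n-1}:=g_{n-1}(y_n)/(y_n-x)\in[\rho,1]$.

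For $k\le n-2$ the inclusion $D_k\subset D_{n-2}$ forces the right $D_k$-neighbour of $x$ to lie no closer than the right $D_{n-2}$-neighbour, whence $z_k\ge y_{n-2}$. The key estimate is then
\[
y_{n-2}-y_n=(y_{n-2}-y_{n-1})+(y_{n-1}-y_n)\ge \rho(\alpha_{n-1}+\alpha_n)\ge \alpha_n\ge y_n-x,
\]
where the first inequality applies condition (4) inside $D_{n-1}$ and $D_n$ respectively, and the second is a rearrangement of the hypothesis $\alpha_n\le\frac{\rho}{1-\rho}\alpha_{n-1}$. Consequently $g_k(y_n)=y_n-x$ for $1\le k\le n-2$, which yields the claimed formula. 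The only delicate step, and the sole place where the new growth condition on $(\alpha_n)_n$ is used, is precisely this displayed estimate; everything else is bookkeeping with the defining axioms of the decomposition.
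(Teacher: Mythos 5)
Your proof is correct and takes essentially the same route as the paper's: the key chain $y_{n-2}-y_n=(y_{n-2}-y_{n-1})+(y_{n-1}-y_n)\geq \rho(\alpha_{n-1}+\alpha_n)\geq \alpha_n\geq y_n-x$ is identical in both, yours applied directly where the paper phrases it as a contradiction. You also spell out the bound $\delta_{n-1}\geq\rho$, which the paper defers to the remark immediately following the lemma.
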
  
\begin{proof}
We have that
$$
\frac{T_w(y_n)-T_w(x)}{y_n-x}=\sum_{k=1}^{n-1}w_k\frac{g_k(y_n)}{y_n-x}.
$$
If $g_{n-1}(y_n)=y_n-x$ then $g_k(y_n)=y_n-x$ for every $k<n$ too, hence
$$
\sum_{k=1}^{n-1}w_k\frac{g_k(y_n)}{y_n-x}=\sum_{k=1}^{n-1}w_k
$$
and the claim holds with $\delta _{n-1}=1$. Otherwise $g_{n-1}(y_n)=y_{n-1}-y_n$. 
If $g_{n-2}(y_n)=y_{n-2}-y_n$ then
$$
\alpha _n\geq y_n-x> y_{n-2}-y_n=y_{n-2}-y_{n-1}+y_{n-1}-y_n\geq \rho (\alpha _{n-1}+\alpha _n)
$$
which is impossible. 
We deduce that $g_k(y_n)=y_n-x$ for every $k\leq n-2$ which gives us the result.
\end{proof}

\begin{rmk}
Observe that $\delta _{n-1}=\frac{y_{n-1}-y_n}{y_n-x}\geq \rho$
when $g_{n-1}(y_n)=y_{n-1}-y_n$.  
\end{rmk}

\begin{lemma}\label{lema3}
Assume that 
$\alpha _{n+1}\leq \frac{\rho}{1-\rho} \alpha _n$
for every $n$. Let $x\in D_1$,  $w_0=0$, and $y_{n+1}<y_n$ for every $n$.
If $\delta _{n-1}<1$ and $\delta _n<1$  then  
$$
\Gamma _n:=\frac{T_w(y_{n})-T_w(y_{n+1})}{y_{n}-y_{n+1}}=
\sum_{k=1}^{n-2}w_k+\frac{y_{n-1}-y_n-(y_{n+1}-x)}{y_n-y_{n+1}}w_{n-1}-w_{n}.
$$
\end{lemma}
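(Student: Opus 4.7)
The plan is to derive $\Gamma_n$ by two invocations of Lemma \ref{lema2}, one at index $n$ and one at index $n+1$, and then use the Remark to replace the two $\delta$'s by their explicit geometric expressions before simplifying.

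First I would write $T_w(y_n)-T_w(x)=(y_n-x)\Delta_n$ and $T_w(y_{n+1})-T_w(x)=(y_{n+1}-x)\Delta_{n+1}$ with
$$
\Delta_n=\sum_{k=1}^{n-2}w_k+\delta_{n-1}w_{n-1}, \qquad \Delta_{n+1}=\sum_{k=1}^{n-1}w_k+\delta_n w_n,
$$
the second being Lemma \ref{lema2} at index $n+1$ (all of its hypotheses are inherited from the present statement). Subtracting yields $T_w(y_n)-T_w(y_{n+1})$, and the common partial sum $\sum_{k=1}^{n-2}w_k$ appears with coefficient $(y_n-x)-(y_{n+1}-x)=y_n-y_{n+1}$; after dividing by $y_n-y_{n+1}$ this produces exactly the $\sum_{k=1}^{n-2}w_k$ of the claimed identity. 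What is left is a combination of $w_{n-1}$ and $w_n$ with coefficients $(y_n-x)\delta_{n-1}-(y_{n+1}-x)$ and $-(y_{n+1}-x)\delta_n$ respectively, each divided by $y_n-y_{n+1}$.

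To finish, I will invoke the Remark: the hypothesis $\delta_{n-1}<1$ forces $(y_n-x)\delta_{n-1}=y_{n-1}-y_n$, and the hypothesis $\delta_n<1$ forces $(y_{n+1}-x)\delta_n=y_n-y_{n+1}$. The second identity collapses the $w_n$ coefficient to $-1$, and the first rewrites the $w_{n-1}$ coefficient as $\bigl(y_{n-1}-y_n-(y_{n+1}-x)\bigr)/(y_n-y_{n+1})$, matching the statement exactly.

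The whole argument is a bookkeeping exercise and I do not expect any genuine obstacle. The condition $\alpha_{n+1}\leq\frac{\rho}{1-\rho}\alpha_n$ is used only as the admissibility hypothesis for the two invocations of Lemma \ref{lema2}; the hypotheses $\delta_{n-1}<1$ and $\delta_n<1$ are used only to be in the nontrivial case of the Remark, where the explicit expressions of the two $\delta$'s are available.
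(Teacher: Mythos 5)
Your proposal is correct. The paper proves the lemma in one line by evaluating each $g_k$ directly on $[y_{n+1},y_n]$: it observes that $g_n(y_{n+1})=y_n-y_{n+1}$, $g_{n-1}(y_n)=y_{n-1}-y_n$, $g_{n-1}(y_{n+1})=y_{n+1}-x$, and that $g_k$ is linear (with slope $1$) on $[y_{n+1},y_n]$ for $k\leq n-2$, then reads off the difference quotient. Your route --- subtracting $(y_{n+1}-x)\Delta_{n+1}$ from $(y_n-x)\Delta_n$ and then using the Remark to convert $\delta_{n-1}<1$ and $\delta_n<1$ into the identities $(y_n-x)\delta_{n-1}=y_{n-1}-y_n$ and $(y_{n+1}-x)\delta_n=y_n-y_{n+1}$ --- is an equivalent but differently organized computation; the algebra checks out and yields exactly the stated coefficients. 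The one modest advantage of your version is that the fact $g_k(y_{n+1})=y_{n+1}-x$ for all $k\leq n-1$ (in particular for $k=n-1$), which the paper's proof simply asserts, is inherited automatically from the proof of Lemma \ref{lema2} applied at index $n+1$, where it is justified using the hypothesis $\alpha_{n+1}\leq\frac{\rho}{1-\rho}\alpha_n$. Both arguments ultimately rest on the same two facts about where the nearest points of $D_{n-1}$ and $D_n$ lie, so this is a difference of bookkeeping rather than of substance.
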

\begin{proof}
It is enough to observe that $g_n(y_{n+1})=y_n-y_{n+1}$,  $g_{n-1}(y_n)=y_{n-1}-y_n$, 
$g_{n-1}(y_{n+1})=y_{n+1}-x$, and $g_k$ is linear on $[y_{n+1},y_n]$ for every $k\leq n-2$.
\end{proof}

\begin{prop}\label{rogrande}
Let $w\notin c_0$.
Assume that $\rho >\frac{1}{2}$.
If $x\in D_1$, $w_0=0$, and   $y_{n+1}<y_n$ for every $n$, then
$T_w$ is not right derivable at $x$. 
\end{prop}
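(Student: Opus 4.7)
The plan is to argue by contradiction. Suppose $T_w$ admits a finite right derivative $L:=T'^+_w(x)$. Since $\rho>\tfrac12$ yields $\tfrac{\rho}{1-\rho}>1$, and the mesh sequence $(\alpha_n)_n$ may be assumed nonincreasing (each gap of $D_n$ is only subdivided by the new points of $D_{n+1}\supset D_n$), the hypothesis $\alpha_{n+1}\le\tfrac{\rho}{1-\rho}\alpha_n$ of Lemma \ref{lema2} is automatically satisfied. That lemma then gives
$$
\Delta_n:=\frac{T_w(y_n)-T_w(x)}{y_n-x}=\sum_{k=1}^{n-2}w_k+\delta_{n-1}w_{n-1},\qquad \delta_{n-1}\in[\rho,1].
$$
Because $y_n\searrow x$ with $y_n>x$, the existence of $L$ forces $\Delta_n\to L$, and hence $\Delta_{n+1}-\Delta_n\to 0$.

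Next I compute the difference directly from the representation above and obtain the telescoping identity
$$
\Delta_{n+1}-\Delta_n=(1-\delta_{n-1})w_{n-1}+\delta_n w_n,
$$
so $(1-\delta_{n-1})w_{n-1}+\delta_n w_n\to 0$. Solving for $\delta_n w_n$ and using the bounds $\delta_n\ge\rho$ and $1-\delta_{n-1}\le 1-\rho$, I arrive at the contractive recursion
$$
|w_n|\le\frac{1-\rho}{\rho}\,|w_{n-1}|+\varepsilon_n,\qquad \varepsilon_n\to 0.
$$

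Since $\rho>\tfrac12$, the ratio $q:=(1-\rho)/\rho$ is strictly less than $1$. A routine iteration of the recursion (namely $|w_n|\le q^n|w_0|+\sum_{k=1}^{n}q^{n-k}\varepsilon_k$, split into a finite head tending to zero and a geometric tail with $\varepsilon_k$ arbitrarily small) forces $|w_n|\to 0$, so $w\in c_0$, contradicting the standing hypothesis $w\notin c_0$. The main step is the clean expression for $\Delta_n$ supplied by Lemma \ref{lema2}; after that the argument is a telescoping identity plus a standard contraction, and the threshold $\rho>\tfrac12$ enters precisely to ensure $q<1$, which is exactly what is needed for the recursion to drive $w_n$ to zero.
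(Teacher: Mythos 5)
Your proof is correct and follows essentially the same route as the paper: invoke Lemma \ref{lema2} (after noting that $\rho>\tfrac12$ yields its hypothesis via the monotonicity of the mesh), telescope to get $(1-\delta_{n-1})w_{n-1}+\delta_n w_n\to 0$, and bound $|w_n|\le \tfrac{1}{\rho}|\eta_n|+\tfrac{1-\rho}{\rho}|w_{n-1}|$ to force $w\in c_0$. The only difference is that you spell out the final contraction iteration, which the paper leaves implicit.
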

\begin{proof}
If $T_w$ is right derivable at $x$ then
$$
T_w'^+(x)=\lim_n\Delta _n.
$$
Observe that $\rho >\frac{1}{2}$ implies $\alpha _{n+1}\leq \frac{\rho}{1-\rho} \alpha _n$.
Hence, by Lemma \ref{lema2},
$$
0=\lim_n (\Delta _{n+1}-\Delta _n)=
\lim_n\bigl( (1-\delta _{n-1})w_{n-1}+\delta _nw_n\bigr).
$$
We denote
$$
\eta _n:=\delta _nw_n+(1-\delta _{n-1})w_{n-1}
$$
and we have
$$
|w_n|\leq \frac{1}{\delta _n}|\eta _n|+\frac{1-\delta _{n-1}}{\delta _n}|w_{n-1}|
\leq \frac{1}{\rho}|\eta _n|+\frac{1-\rho}{\rho}|w_{n-1}|.
$$
Hence, $w\in c_0$.
\end{proof}

\begin{prop}\label{xenD}
Let $w\notin c_0$.
Assume that $\alpha _{n+1}\leq \rho \alpha _n$
for every $n$.
If $x\in D_1$, $w_0=0$, and   $y_{n+1}<y_n$ for every $n$, then
$T_w$ is not right derivable at $x$. 
\end{prop}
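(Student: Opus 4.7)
The plan is to argue by contradiction: assume $T_w$ is right derivable at $x$ with derivative $L$ and deduce $w\in c_0$. Since $\alpha_{n+1}\le\rho\alpha_n$ is strictly stronger than the hypothesis $\alpha_{n+1}\le\tfrac{\rho}{1-\rho}\alpha_n$ of Lemma~\ref{lema2}, that lemma yields
\[
\Delta_n:=\frac{T_w(y_n)-T_w(x)}{y_n-x}=\sum_{k=1}^{n-2}w_k+\delta_{n-1}w_{n-1},\qquad \delta_n\in[\rho,1].
\]
Since $y_n\to x$ from the right, right differentiability at $x$ gives $\Delta_n\to L$ directly, and
\begin{equation}
\eta_n:=\Delta_{n+1}-\Delta_n=(1-\delta_{n-1})w_{n-1}+\delta_n w_n\longrightarrow 0. \tag{$\ast$}
\end{equation}
For $\rho>\tfrac12$ this alone forces $w_n\to 0$ as in Proposition~\ref{rogrande}; the new content here is that $\rho\le\tfrac12$ is allowed, for which $(\ast)$ is no longer a contractive recursion.

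The decisive additional ingredient is that the hypothesis $\alpha_{n+1}\le\rho\alpha_n$ controls the length of consecutive runs of indices with $\delta_n<1$. By the Remark, $\delta_n<1$ is equivalent to $y_{n+1}-x>(y_n-x)/2$, so if $\delta_{n_0},\dots,\delta_{n_0+\ell-1}$ are all $<1$ then iteration yields
\[
\frac{\rho\alpha_{n_0}}{2^{\ell}}\le\frac{y_{n_0}-x}{2^{\ell}}<y_{n_0+\ell}-x\le\alpha_{n_0+\ell}\le\rho^{\ell}\alpha_{n_0},
\]
so $(2\rho)^{\ell}\ge\rho$, which bounds $\ell\le\ell_0(\rho)$ by a constant depending only on $\rho$ whenever $\rho<\tfrac12$.

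The remainder of the argument is propagation. At every index $n$ with $\delta_n=1$, $(\ast)$ at step $n+1$ reduces to $\delta_{n+1}w_{n+1}=\eta_{n+1}$, so $|w_{n+1}|\le|\eta_{n+1}|/\rho\to 0$ along such indices. Iterating $(\ast)$ forward through the subsequent run of $\delta<1$ (of length at most $\ell_0(\rho)$) via $|w_m|\le\rho^{-1}|\eta_m|+\tfrac{1-\rho}{\rho}|w_{m-1}|$, one reaches the uniform estimate $|w_n|\le C(\rho)\max_{0\le j\le\ell_0(\rho)}|\eta_{n-j}|\to 0$, contradicting $w\notin c_0$. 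The main obstacle is the boundary case $\rho=\tfrac12$, in which the run-length estimate degenerates; here one analyses the normalised ratios $\gamma_n=(y_n-x)/\alpha_n\in[\tfrac12,1]$ and uses that, whenever $\delta_n<1$ persists, they are strictly increasing and hence Cauchy, so $\sum_n(1-\delta_{n-1})<\infty$. Combining this summability with $(\ast)$ and the a priori bound $(w_n\alpha_n)\in\ell^1$ still forces $w_n\to 0$, completing the argument.
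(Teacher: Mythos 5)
Your argument takes a genuinely different route from the paper's. The paper never attempts to control the runs of indices with $\delta _n<1$: instead it introduces a second difference quotient $\Gamma _n$ over $[y_{n+1},y_n]$ (Lemma \ref{lema3}), observes via Lemma \ref{lema1} that $\lambda _n=\Delta _n-\Gamma _n\to 0$, and shows that in the regime $\delta _{n-1}<1$, $\delta _n<1$ the coefficient of $w_{n-1}$ in $\lambda _n$ lies between $0$ and $1-\tfrac{\rho}{2}$; together with the easy cases $\delta _{n-1}=1$ and $\delta _n=1$ this produces a single recursion $|w_n|\leq (\text{null sequence})+(1-\tfrac{\rho}{2})|w_{n-1}|$ that is contractive for \emph{every} $\rho \in (0,1]$ in one stroke. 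Your substitute for $\rho <\tfrac12$ --- the run-length bound $(2\rho )^{\ell}>\rho$ obtained from $y_{n_0}-x\geq \rho \alpha _{n_0}$, the doubling characterization of $\delta _n<1$, and $\alpha _{n_0+\ell}\leq \rho ^{\ell}\alpha _{n_0}$, followed by propagation from the reset $|w_{n+1}|\leq |\eta _{n+1}|/\rho$ at each index with $\delta _n=1$ --- is correct and complete, and $\rho >\tfrac12$ is indeed Proposition \ref{rogrande}.

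The case $\rho =\tfrac12$, however, is a genuine gap rather than a boundary technicality. The claim $\sum_n(1-\delta _{n-1})<\infty$ is not established and is false as stated: the ratios $\gamma _n=(y_n-x)/\alpha _n$ increase only \emph{while} $\delta _n<1$, so the telescoping bound $1-\delta _n\leq 4(\gamma _{n+1}-\gamma _n)$ yields $\sum (1-\delta _n)\leq 2$ only within each maximal run; with infinitely many runs the global sum can diverge. Moreover, even with per-run summability, the recursion $|w_m|\leq 2|\eta _m|+2(1-\delta _{m-1})|w_{m-1}|$ has coefficients that can equal $1$ when $\rho =\tfrac12$, so over runs of unbounded length the conclusion $w_m\to 0$ does not follow from $\eta _m\to 0$ by inspection --- one needs an additional step, e.g.\ unrolling the recursion and bounding $\sum_j\prod_{i=j}^{m-1}c_i\leq 1+\sum_i c_i$ with $c_i=2(1-\delta _i)$, handled separately for the final (possibly infinite) run. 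Finally, the appeal to $(w_n\alpha _n)\in \ell ^1$ contributes nothing here, since it gives no pointwise control on $w_n$. The gap is repairable along your lines, but as written the last sentence of your proof is an assertion, not an argument; the paper's second quotient $\Gamma _n$ is exactly the device that avoids any case distinction in $\rho$.
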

\begin{proof}
Observe first that  $\alpha _{n+1}\leq \rho \alpha _n$
implies $\alpha _{n+1}\leq \frac{\rho}{1-\rho} \alpha _n$.
If $T_w$ is right derivable at $x$ then
$$
T_w'^+(x)=\lim_n\Delta _n=\lim_n \Gamma _n,
$$
where the second equality follows from Lemma \ref{lema1}. Hence, we have 
$$
0=\lim_n (\Delta _{n+1}-\Delta _n)=
\lim_n\bigl( (1-\delta _{n-1})w_{n-1}+\delta _nw_n\bigr)
$$
by Lemma \ref{lema2}. As before, we denote
$$
\eta _n:=\delta _nw_n+(1-\delta _{n-1})w_{n-1}.
$$
If $\delta _{n-1}=1$ then
$$
|w_n|=\frac{1}{\delta _n}|\eta _n|\leq \frac{1}{\rho}|\eta _n|.
$$
If $\delta _n=1$ then
$$
|w_n|\leq |\eta _n|+(1-\delta _{n-1})|w_{n-1}|\leq |\eta _n|+(1-\rho )|w_{n-1}|.
$$

We restrict to the situation $\delta _{n-1}<1$ and $\delta _n<1$.
If we denote $\lambda _n:=\Delta _n-\Gamma _n$, we have
that $\lim_n\lambda _n=0$, and 
$$
\lambda _n=
w_{n-1}\left( \delta _{n-1}-\frac{y_{n-1}-y_n-(y_{n+1}-x)}{y_n-y_{n+1}}\right) +w_n
$$
by Lemma \ref{lema3}.
It is immediate that
$$
\frac{y_{n-1}-y_n-(y_{n+1}-x)}{y_n-y_{n+1}}<\delta _{n-1}
$$
since
$$
\frac{y_{n-1}-y_n}{y_n-y_{n+1}}-\delta _{n-1}=\delta _{n-1}\left( \frac{y_n-x}{y_n-y_{n+1}}-1\right)
=\delta _{n-1}\left( \frac{y_{n+1}-x}{y_n-y_{n+1}}\right) <\frac{y_{n+1}-x}{y_n-y_{n+1}}.
$$
If $\delta _{n-1}\geq 1-\frac{\rho}{2}$ then
\begin{align*}
&0<\delta _{n-1}-\frac{y_{n-1}-y_n-(y_{n+1}-x)}{y_n-y_{n+1}}=
\delta _{n-1}+\frac{1}{\delta _n}-\delta _{n-1}\frac{y_n-x}{y_n-y_{n+1}}\\
&=\frac{1}{\delta _n}+\delta _{n-1}\frac{x-y_{n+1}}{y_n-y_{n+1}}=
\frac{1}{\delta _n}-\frac{\delta _{n-1}}{\delta _n}\leq \frac{\rho}{2\delta _n}\leq \frac{1}{2},
\end{align*}
and therefore $|w_n|\leq |\lambda _n|+\frac{1}{2}|w_{n-1}|$. 

Alternatively, if $\delta _{n-1}\leq 1-\frac{\rho}{2}$,
as $\alpha _{n+1}\leq \rho \alpha _n$ implies that 
$$
\frac{y_{n-1}-y_n-(y_{n+1}-x)}{y_n-y_{n+1}}\geq 0,
$$
we have that  $|w_n|\leq |\lambda _n|+\delta _{n-1}|w_{n-1}|\leq |\lambda _n|+(1-\frac{\rho}{2})|w_{n-1}|$.

Finally, joining all the inequalities we conclude that
$$
|w_n|\leq \max \bigg\{ |\lambda _n|+\frac{1}{\rho} |\eta _n|\bigg\} +\left( 1-\frac{\rho}{2}\right) |w_{n-1}|
$$  
which implies that $w\in c_0$.
\end{proof}

As similar results hold for the left derivative, we deduce the following theorem.

\begin{thm}\label{mainD}
Let $w\notin c_0$.
Assume that the decomposition satisfies that 
$D_{n+1}\cap I\neq \emptyset$ for every $I\in \mathcal{F}_n$, and
that  either $\rho >\frac{1}{2}$ or $\alpha _{n+1}\leq \rho \alpha _n$ 
for every $n$.
If $x\in D$, then $T_w$ has not finite lateral derivatives
at $x$.
\end{thm}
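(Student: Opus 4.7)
The plan is to reduce the theorem to Propositions \ref{rogrande} and \ref{xenD}, which already handle the right derivative at points of $D_1$ when $w_0=0$. The argument proceeds in three stages: a normalization reducing an arbitrary $x\in D$ to $x\in D_1$ with $w_0=0$, a direct appeal to the existing propositions for the right derivative, and a symmetric argument for the left derivative.

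First I would justify the standard normalization described at the opening of Section 2. Assume $x\in (0,1)\cap D$ (the endpoints $0,1$ are handled by the same argument applied to the single existing lateral derivative). Let $n_0\geq 1$ be such that $x\in D_{n_0}\setminus D_{n_0-1}$. The partial sum $G_{n_0-1}=\sum_{k=0}^{n_0-1}w_kg_k$, being a finite linear combination of distance functions to finite sets, is piecewise linear in a neighborhood of $x$ and therefore admits both lateral derivatives at $x$. Hence $T_w$ has a finite right (resp.\ left) derivative at $x$ if and only if the tail $T_w-G_{n_0-1}$ does. Reindexing the families $(D_n)$, $(w_n)$, $(\alpha_n)$ starting from $n_0-1$, one reduces to the case $x\in D_1$ and $w_0=0$ while preserving all structural hypotheses: the bounds on $(\alpha_n)$ are stable under a shift, $\rho$ is unchanged, the intersection condition $D_{n+1}\cap I\neq \emptyset$ persists, and $w\notin c_0$ remains valid since only finitely many terms have been removed.

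Second, I would observe that the standing hypothesis $D_{n+1}\cap I\neq \emptyset$ for every $I\in \mathcal{F}_n$ guarantees that the sequence $(y_n)$ defined by $(x,y_n)\in \mathcal{F}_n$ is strictly decreasing, as noted in the paragraph before Proposition \ref{sencillo}. With this in hand and $w\notin c_0$, Proposition \ref{rogrande} applies directly if $\rho>\frac{1}{2}$, and Proposition \ref{xenD} applies if $\alpha_{n+1}\leq \rho\alpha_n$ for every $n$. Either way, $T_w$ is not right derivable at $x$.

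Finally, the left derivative is handled by the symmetric construction: set $z_n\in D_n$ with $(z_n,x)\in \mathcal{F}_n$, which is strictly increasing to $x$ by the same intersection hypothesis. The analogues of Lemmas \ref{lema2} and \ref{lema3}, and the corresponding versions of Propositions \ref{rogrande} and \ref{xenD}, are obtained by mirroring the computations, replacing $y_n-x$ with $x-z_n$ and applying part $(3)$ of Lemma \ref{lema1} in place of part $(2)$. Since all the analytic work is already done in the right-derivative case, no genuinely new obstacle arises; the only care required is the bookkeeping needed to ensure that signs and the asymmetric one-sided hypotheses transfer correctly to the left side. Combining both sides yields the theorem.
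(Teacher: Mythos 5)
Your proposal is correct and follows exactly the route the paper intends: the normalization to $x\in D_1$ with $w_0=0$ from the opening of Section 2, the observation that $D_{n+1}\cap I\neq\emptyset$ forces $(y_n)$ to be strictly decreasing so that Propositions \ref{rogrande} and \ref{xenD} apply to the right derivative, and the mirrored argument for the left derivative. The paper itself leaves all of this implicit ("as similar results hold for the left derivative, we deduce..."), so your write-up is simply a more explicit version of the same proof.
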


The following example shows the necessity of a condition
as $D_{n+1}\cap I\neq \emptyset$ for every $I\in \mathcal{F}_n$.

\begin{example}
Let $w\notin c_0$ be defined as $w_0=0$, $w_{3k-2}=1$, $w_{3k-1}=-1$ for every $k$,
$w_3=-2^{-1}$, and $w_{3k}=2^{-k}$ for every $k>1$. Let us consider a decomposition satisfying
$D_{3k-2}=D_{3k-1}$, and $D_{3(k+1)-2}=D_{3k}=D_{3k-1}\cup \widetilde{D}_{3k-1}$ for every $k$.
It satisfies $\widetilde{D}_n\subset D_{n+1}\cup \widetilde{D}_{n+1}$ for every $n$. We have that $T'_w(x)=0$
for every $x\in D_1$.
\end{example}
\begin{proof}
It is enough to observe that 
$$
T_w(z)=-\frac{1}{2}g_3(z)+\sum_{k=2}^{\infty}\frac{1}{2^k}g_{3k}(z)
$$
for every $z$. As a consequence of the results obtained in \cite{FGGLL1}, where a deep study of the case $w\in \ell^1$ is developed, we conclude
$$
T'^+_w(x)=-\frac{1}{2}+\sum_{k=2}^{\infty}\frac{1}{2^k}=0
$$
and
$$
T'^-_w(x)=\frac{1}{2}+\sum_{k=2}^{\infty}\left( -\frac{1}{2^k}\right)=0.
$$
\end{proof}

If $\mathcal{L}$ denotes the Lebesgue measure on $\mathbb{R}$, the Example \ref{ejemploD} below will illustrate how the existence of $\rho \in (0,1]$ such that 
$\mathcal{L}(I)\geq \rho \alpha _n$ for every $I\in \mathcal{F}_n$
should be required in order to get the nowhere derivability of the function $T_w$.

We may also study the subdifferential and the superdifferential. Firstly, we consider the situation in which the weights are nonnegative,
without restrictions on the decomposition. We have the following result that 
extends the well known result for the original Takagi function (see \cite{G} and \cite{FGG}).

\begin{prop}\label{positivosxenD}
Assume that $w\notin c_0$ and $w_k\geq 0$ for every $k$. 
If $x\in D$, then the function $T_w(z)-\zeta z$ has a local minimum at $x$ for every $\zeta\in \mathbb{R}$. In particular, 
$$
\partial T_w(x)=\mathbb{R}.
$$
\end{prop}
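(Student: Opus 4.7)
My strategy is to establish that the Dini derivatives of $T_w$ at $x$ satisfy $d_+T_w(x)=+\infty$ and $D^-T_w(x)=-\infty$. Both assertions of the proposition follow at once: the identity $\partial T_w(x)=[D^-T_w(x),d_+T_w(x)]\cap\mathbb{R}$ recalled in the introduction gives $\partial T_w(x)=\mathbb{R}$, and for any $\zeta\in\mathbb{R}$ the inequality $(T_w(z)-T_w(x))/(z-x)>\zeta$ will hold for $z$ slightly to the right of $x$ while the reverse inequality will hold slightly to the left; either way this rearranges to $T_w(z)-\zeta z\geq T_w(x)-\zeta x$ in a full neighborhood of $x$.

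Following the reduction already carried out at the beginning of Section 2, I may assume $x\in D_1$ and $w_0=0$, since the omitted finite head $G_{n_0-1}$ is piecewise linear near $x$ with finite one-sided slopes and so contributes only a bounded constant to the Dini derivatives. After this reduction, $g_k(x)=0$ for every $k\geq 1$, so $T_w(x)=0$ and $T_w(z)=\sum_{k\geq 1}w_kg_k(z)\geq 0$ for every $z$.

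The heart of the proof is a simple counting. Let $y_k$ denote the right $D_k$-neighbor of $x$; by conditions (3)--(4) of the decomposition, $y_k-x\in[\rho\alpha_k,\alpha_k]$ and $(x,y_k)\cap D_k=\emptyset$. For $z$ slightly to the right of $x$ define
$$K(z):=\max\{k\geq 1:\, y_k-x\geq 2(z-x)\}.$$
For each $1\leq k\leq K(z)$, since $z\in(x,y_k)$ and the only $D_k$-points bounding this interval are $x$ and $y_k$, and the defining inequality places $z$ closer to $x$ than to $y_k$, one has $g_k(z)=z-x$. Summing and discarding the remaining nonnegative terms gives
$$T_w(z)-T_w(x)\;\geq\;(z-x)\sum_{k=1}^{K(z)}w_k.$$

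As $z\to x^+$, $K(z)\to\infty$: for any fixed $K$, taking $z-x\leq (y_K-x)/2$ forces $y_k-x\geq y_K-x\geq 2(z-x)$ for all $k\leq K$, so $K(z)\geq K$. Because $w_k\geq 0$ and $w\notin c_0$, the series $\sum_{k\geq 1}w_k$ diverges to $+\infty$, whence $\sum_{k=1}^{K(z)}w_k\to+\infty$ and therefore $d_+T_w(x)=+\infty$. A symmetric argument using the left $D_k$-neighbor of $x$ yields $D^-T_w(x)=-\infty$. The only delicate step is the geometric claim $g_k(z)=z-x$ for $k\leq K(z)$, which I expect to dispatch in a line from the facts that $(x,y_k)$ is a connected component of $[0,1]\setminus D_k$ and that $y_k-z\geq z-x$.
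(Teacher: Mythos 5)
Your argument is correct and rests on the same key observation as the paper's own proof: for $z$ sufficiently close to $x$ one has $g_k(z)=|z-x|$ for all $k$ up to a threshold that tends to infinity as $z\to x$, and since the $w_k$ are nonnegative and $w\notin c_0$ the partial sums $\sum_{k\le K} w_k$ diverge. The paper verifies the local-minimum inequality directly on $|h|<\tfrac13\rho\alpha_n$ (keeping the finite head $\sum_{k<n_0}w_kg_k$ and absorbing it via the Lipschitz bound), whereas you route the identical estimate through $d_+T_w(x)=+\infty$ and $D^-T_w(x)=-\infty$; this is only a difference of packaging.
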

\begin{proof}
If $x\in D$, there exists $n_0$ such that $x\in D_{n_0}$ and $x\notin D_k$ provided that $k<n_0$. 
For every $\zeta\in\mathbb{R}$ let $n$ be such that 
$$
\sum_{k=n_0}^{n}w_k>\sum_{k=1}^{n_0-1}w_k+|\zeta|.
$$
On the other hand, if $|h|<\frac{1}{3}\rho \alpha_n$ then $g_k(x+h)=|h|$ for every $k$, $n_0\leq k \leq n$. Hence, 
\begin{align*}
&T_w(x+h)-T_w(x)-\zeta h \geq \sum_{k=1}^{n_0-1}w_k\left ( g_k(x+h)-g_k(x)\right )+\sum_{k=n_0}^{n}w_k g_k(x+h)-|\zeta||h|\\
&\geq |h|\left [ \sum_{k=n_0}^{n}w_k - \left (\sum_{k=1}^{n_0-1}w_k  +|\zeta|\right)\right ]\geq 0.
\end{align*}
\end{proof}

Observe that in the proof  we only require that $w\notin \ell ^1$.

\begin{cor}
Assume that $w\notin \ell ^1$ and $w_k\geq 0$ for every $k$. If $x\in D$, then $T_w$ is not derivable at $x$.
\end{cor}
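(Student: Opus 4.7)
The plan is to derive this corollary as an essentially immediate consequence of Proposition \ref{positivosxenD} together with the remark following it. The remark explicitly states that the proof of that proposition only requires $w \notin \ell^1$, which is precisely the weaker hypothesis assumed here. So I can invoke the conclusion of Proposition \ref{positivosxenD} under the present hypotheses: for $x \in D$ and every $\zeta \in \mathbb{R}$, the function $T_w(z) - \zeta z$ has a local minimum at $x$, hence $\partial T_w(x) = \mathbb{R}$.

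From this, non-derivability is purely a matter of applying the nonsmooth-analysis dictionary recalled in the introduction. Indeed, it was noted there that $T_w$ is derivable at $x$ if and only if $\partial T_w(x) = \partial^+ T_w(x) \neq \emptyset$, and in that case $\partial T_w(x) = \{T_w'(x)\}$ is a singleton. Since $\mathbb{R}$ is not a singleton, $T_w$ cannot be derivable at $x$.

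There is essentially no obstacle: the only thing to verify is that the hypothesis $w \notin \ell^1$ together with $w_k \geq 0$ does suffice in the proof of Proposition \ref{positivosxenD}. This is clear because for nonnegative $w_k$ the divergence of $\sum_k w_k$ (equivalent to $w \notin \ell^1$) is exactly what is needed to choose $n \geq n_0$ with $\sum_{k=n_0}^{n} w_k > \sum_{k=1}^{n_0-1} w_k + |\zeta|$, and the rest of the estimate there uses only $w_k \geq 0$ and the fact that $g_k(x+h) = |h|$ for sufficiently small $h$ and $n_0 \leq k \leq n$. Thus the corollary reduces to a one-line deduction from the (strengthened reading of) Proposition \ref{positivosxenD}.
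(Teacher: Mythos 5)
Your proposal is correct and follows exactly the route the paper intends: the corollary is stated immediately after Proposition \ref{positivosxenD} and the remark that its proof only uses $w\notin\ell^1$, so that $\partial T_w(x)=\mathbb{R}$, which is incompatible with derivability since the latter would force $\partial T_w(x)$ to be the singleton $\{T_w'(x)\}$. Your verification that the proposition's argument indeed only needs $w_k\geq 0$ and the divergence of $\sum_k w_k$ is accurate and matches the paper's remark.
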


The next results involve arbitrary sequences $w$, and we require that the decomposition
satisfies $\alpha _{n+1}\leq \frac{\rho \alpha _n}{2}$ for every $n$. This is a mild restriction, and the functions
of the Takagi-Van der Waerden Class satisfy it.

\begin{prop}\label{subdiferencialnovacia}
Assume that $w_0=0$ and $\alpha _{n+1}\leq \frac{\rho \alpha _n}{2}$ for every $n$.
If $x\in D_1$ and  $\partial T_w(x)\neq \emptyset$, then
$$
a:=\liminf_n \sum_{k=1}^nw_k\geq 0.
$$
\end{prop}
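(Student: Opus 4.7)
The plan is to evaluate the subdifferential inequality $T_w(x+h)-T_w(x)\ge ch-\varepsilon|h|$ at carefully chosen sequences of test points approaching $x$ from both sides, then combine the resulting inequalities. I would fix any $c\in\partial T_w(x)$, so in particular $c\le d_+T_w(x)$ and $c\ge D^-T_w(x)$.

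First, the hypothesis $\alpha_{n+1}\le\rho\alpha_n/2$ combined with property~(4) of the decomposition gives $y_n-x\ge\rho\alpha_n>\alpha_{n+1}$, so the interval $(x,y_n)$ cannot be a component of $\mathcal{F}_{n+1}$; it must contain a point of $D_{n+1}$, forcing $y_{n+1}<y_n$ strictly. The analogous strict monotonicity holds for the left neighbors $(x_n)_n$, and in particular Lemma~\ref{lema2} applies on both sides.

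For the main calculation I would use the midpoints $\widetilde y_n:=(x+y_n)/2$ and $\widetilde x_n:=(x+x_n)/2$. Since $\widetilde y_n$ lies in the component $(x,y_k)\in\mathcal{F}_k$ for every $k\le n$ and its distance to $x$ does not exceed its distance to $y_k$, one gets $g_k(\widetilde y_n)=(y_n-x)/2$ for $k\le n$; hence
\[
T_w(\widetilde y_n)-T_w(x)=\frac{y_n-x}{2}\,S_n+R_n,\qquad R_n:=\sum_{k>n}w_k\,g_k(\widetilde y_n),
\]
and symmetrically $T_w(\widetilde x_n)-T_w(x)=\tfrac{x-x_n}{2}S_n+R_n'$ with $R_n':=\sum_{k>n}w_k g_k(\widetilde x_n)$. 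Plugging $h=\widetilde y_n-x>0$ and $h=\widetilde x_n-x<0$ into the subdifferential inequality, dividing by $|h|$, and taking liminf gives
\[
\liminf_n(S_n+E_n)\ge c,\qquad \liminf_n(S_n+E_n')\ge -c,
\]
where $E_n:=2R_n/(y_n-x)$ and $E_n':=2R_n'/(x-x_n)$.

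The main obstacle will be to show $E_n,E_n'\to 0$. The universal bound $g_k(\widetilde y_n)\le\alpha_k/2$ together with $y_n-x\ge\rho\alpha_n$ and the geometric decay $\alpha_{n+m}\le(\rho/2)^m\alpha_n$ produced by the hypothesis yields
\[
|E_n|\le\frac{1}{\rho\alpha_n}\sum_{k>n}|w_k|\alpha_k,
\]
and the technical heart of the argument is to show that this tail, or more generally $E_n$ itself, is $o(1)$ by exploiting both the strict geometric decay and the summability $(w_k\alpha_k)_k\in\ell^1$, likely via a dyadic splitting of the tail.

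Granting $E_n,E_n'\to 0$, the two displayed inequalities yield $\liminf_n S_n\ge c$ and $\liminf_n S_n\ge -c$, so $\liminf_n S_n\ge\max(c,-c)\ge 0$, which is the claim.
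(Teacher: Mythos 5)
There is a genuine gap, and it sits exactly where you flag it: the error terms $E_n$ and $E_n'$ do \emph{not} tend to zero in general, and no refinement of the tail estimate can make them. Your own bound $|E_n|\leq\frac{1}{\rho\alpha_n}\sum_{k>n}|w_k|\alpha_k$ only gives a constant: with $\alpha_k=2^{-k}$, $\rho=1$ and $|w_k|=1$ (the relevant regime, since the proposition targets $w\notin c_0$) the right-hand side equals $1$ for every $n$. Worse, $E_n$ genuinely fails to vanish: take the Takagi--Van der Waerden decomposition with odd $r\geq 3$ and $w_k\equiv 1$. Then the midpoint $\widetilde y_n$ belongs to $\widetilde D_k$ for every $k\geq n$ (an odd numerator survives multiplication by $r$), so $g_k(\widetilde y_n)=\tfrac12 r^{-k}$ for $k>n$ and $E_n=r^n\sum_{k>n}r^{-k}=\frac{1}{r-1}$ for all $n$. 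Hence the two displayed liminf inequalities cannot be combined to give $\liminf_nS_n\geq\max(c,-c)$; the conclusion does not follow from your computation.

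The fix is to change the test points: evaluate at $y_n$ and $x_n$ themselves rather than at the midpoints. Since $y_n\in D_n\subset D_k$ for every $k\geq n$, the tail vanishes identically ($g_k(y_n)=0$ for $k\geq n$), and the hypothesis $\alpha_{k+1}\leq\frac{\rho\alpha_k}{2}$ is used precisely to keep $y_n$ in the left half of the component $(x,y_k)\in\mathcal F_k$ for $k<n$: indeed $y_n-x\leq\alpha_n\leq\alpha_{k+1}\leq\frac{\rho\alpha_k}{2}\leq\frac{y_k-x}{2}$, whence $g_k(y_n)=y_n-x$ and $\frac{T_w(y_n)-T_w(x)}{y_n-x}=\sum_{k=1}^{n-1}w_k$ \emph{exactly}, with no remainder. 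This yields $d_+T_w(x)\leq a$ and, symmetrically with $x_n$, $D^-T_w(x)\geq -a$, so $\partial T_w(x)\neq\emptyset$ forces $-a\leq a$. That is the paper's argument; the outer frame of your proposal (testing a fixed $c\in[D^-T_w(x),d_+T_w(x)]$ along sequences from both sides) is the right one, but only the choice of points lying in $D_n$ makes it close.
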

\begin{proof}
For every $n$, we denote $y_n=\min \{ y\in D_n:x<y\}$. Observe that if $d_k\in \widetilde{D}_k$ is the midpoint
between $x$ and $y_k$, then $d_k-x\geq \frac{\rho \alpha _k}{2}\geq \alpha _{k+1}$.
This implies that $g_k(y_n)=y_n-x$ provided that $k<n$. Hence
\begin{align*}
&d_+T_w(x)=\liminf_{t\to 0^+}\frac{T_w(x+t)-T_w(x)}{t}=\liminf_{t\to 0^+}\frac{T_w(x+t)}{t}\\&\leq
\liminf_n\frac{T_w(y_n)}{y_n-x}=\liminf_n\sum_{k=1}^{n-1}w_k\frac{g_k(y_n)}{y_n-x}=
\liminf_n\sum_{k=1}^{n-1}w_k=a.
\end{align*}
Now we denote $x_n=\max \{ y\in D_n:y<x\}$. Again, we have that if $d_k\in \widetilde{D}_k$ is the midpoint
between $x$ and $x_k$, then $x-d_k\geq \alpha _{k+1}$. Consequently
\begin{align*}
&D^-T_w(x)=\limsup_{t\to 0^-}\frac{T_w(x+t)-T_w(x)}{t}
=\limsup_{t\to 0^-}\frac{T_w(x+t)}{t}\\ &\geq
\limsup_n\frac{T_w(x_n)}{x_n-x}=-\liminf_n\sum_{k=1}^{n-1}w_k\frac{g_k(x_n)}{x-x_n}=
-\liminf_n\sum_{k=1}^{n-1}w_k=-a.
\end{align*}
Therefore, if $\partial T_w(x)\neq \emptyset$ then $-a\leq a$ which implies $a\geq 0$.
\end{proof}

\begin{prop}
Assume that $w_0=0$ and $\alpha _{n+1}\leq \frac{\rho \alpha _n}{2}$ for every $n$.
If $x\in D_1$ and  $\partial ^+T_w(x)\neq \emptyset$, then
$$
b:=\limsup_n \sum_{k=1}^nw_k\leq 0.
$$
\end{prop}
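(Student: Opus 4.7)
The plan is to deduce this proposition as a direct dual of Proposition~\ref{subdiferencialnovacia}, exploiting the linearity of the map $w \mapsto T_w$. The key observation is that $T_{-w} = -T_w$ pointwise, so by the definition $\partial^+ f(x) = -\partial(-f)(x)$ recalled in the introduction,
$$\partial^+ T_w(x) = -\partial(-T_w)(x) = -\partial T_{-w}(x).$$
Hence $\partial^+ T_w(x) \neq \emptyset$ is equivalent to $\partial T_{-w}(x) \neq \emptyset$.

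Next I would verify that the hypotheses of Proposition~\ref{subdiferencialnovacia} are preserved when $w$ is replaced by $-w$: we have $(-w)_0 = -w_0 = 0$, while the conditions $x \in D_1$ and $\alpha_{n+1} \leq \rho \alpha_n / 2$ concern only the decomposition and do not involve the weights. Applying Proposition~\ref{subdiferencialnovacia} to the sequence $-w$ therefore yields
$$\liminf_n \sum_{k=1}^n (-w_k) \geq 0,$$
which rewrites as $-\limsup_n \sum_{k=1}^n w_k \geq 0$, that is, $b \leq 0$.

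There is essentially no obstacle in this argument; it is a formal consequence of the previous proposition and the standard identity between sub- and superdifferentials. If one preferred a self-contained direct proof, one could mirror the computation in the proof of Proposition~\ref{subdiferencialnovacia} using the dual identity $\partial^+ T_w(x) = [D^+ T_w(x), d_- T_w(x)] \cap \mathbb{R}$: evaluating at the same sequences $y_n$ and $x_n$ used there gives $b \leq D^+ T_w(x)$ and $d_- T_w(x) \leq -b$, and the hypothesis $\partial^+ T_w(x) \neq \emptyset$ forces $D^+ T_w(x) \leq d_- T_w(x)$, whence $b \leq -b$ and so $b \leq 0$.
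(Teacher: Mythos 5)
Your proof is correct and is essentially identical to the paper's: both reduce the statement to Proposition~\ref{subdiferencialnovacia} applied to the weight sequence $-w$, using $-T_w = T_{-w}$ and the identity $\partial^+ T_w(x) = -\partial(-T_w)(x)$, together with $\liminf_n \sum_{k=1}^n (-w_k) = -\limsup_n \sum_{k=1}^n w_k$.
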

\begin{proof}
Assume that $\partial ^+T_w(x)\neq \emptyset$, this implies that 
$\partial (-T_w)(x)\neq \emptyset$, hence
$$
-b=\liminf_n \sum_{k=1}^n(-w_k)\geq 0
$$
and consequently $b\leq 0$
since 
$$
(-T_w)(x)=\sum_{n=0}^{\infty}(-w_n)g_n(x).
$$
\end{proof}

If $x\in D$, it is possible that $x\in \widetilde{D}$ too, but if this is the situation,
then $x\in \widetilde{D}_k$ implies $k<n_0$ where $n_0$ is the smallest index $n$ 
satisfying $x\in D_n$. We may write
$$
T_w=\sum_{k<n_0, x\notin \widetilde{D}_k}w_kg_k+\sum_{k<n_0, x\in \widetilde{D}_k}w_kg_k
+\sum_{k={n_0}}^{\infty}w_kg_k.
$$ 
The function $\sum_{k<n_0, x\notin \widetilde{D}_k}w_kg_k$ is derivable at $x$, and  therefore it does not modify
the derivability character of $T_w$. Hence, we may assume that $x\in \widetilde{D}_k$ for
every $k<n_0$. We have the following results:

\begin{prop}\label{subxenDyDtilde}
Assume that $\alpha _{n+1}\leq \frac{\rho \alpha _n}{2}$ for every $n$.
If $x\in D_{n_0}$, $x\in \widetilde{D}_k$ for every $k<n_0$, and  $\partial T_w(x)\neq \emptyset$, then
$$
\liminf_n \sum_{k=n_0}^nw_k-\sum_{k=0}^{n_0-1}w_k\geq 0.
$$
\end{prop}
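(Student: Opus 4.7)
The plan is to adapt the proof of Proposition \ref{subdiferencialnovacia}, the only substantive difference being the contribution of the levels $k<n_0$: since $x$ is the midpoint of some interval $(a_k,b_k)\in\mathcal{F}_k$, the function $g_k$ does not vanish at $x$ but is locally a tent with apex at $x$, that is, $g_k(z)=g_k(x)-|z-x|$ for $z$ close enough to $x$. This produces an extra signed contribution to the difference quotients that accounts for the $\sum_{k=0}^{n_0-1}w_k$ term in the claim.

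Define $y_n=\min\{y\in D_n:x<y\}$ and $x_n=\max\{y\in D_n:y<x\}$. Because $\alpha_{j+1}\le\rho\alpha_j/2$, one has $\alpha_n\le\rho\alpha_k/2$ whenever $n>k$, while condition (3) gives $y_n-x,\,x-x_n\le\alpha_n$. Hence, for $n$ large enough: (i) if $n_0\le k<n$, then $x\in D_k$ and the midpoint argument of Proposition \ref{subdiferencialnovacia} yields $g_k(y_n)=y_n-x$ and $g_k(x_n)=x-x_n$; (ii) if $k<n_0$, then $y_n-x$ and $x-x_n$ are smaller than $(b_k-a_k)/2\ge\rho\alpha_k/2$, so the tent formula gives $g_k(y_n)-g_k(x)=-(y_n-x)$ and $g_k(x_n)-g_k(x)=x_n-x$; (iii) if $k\ge n$, then $g_k(y_n)=g_k(x_n)=g_k(x)=0$. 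Summing over $k$ produces the closed-form identities
\[
\frac{T_w(y_n)-T_w(x)}{y_n-x}=\sum_{k=n_0}^{n-1}w_k-\sum_{k=0}^{n_0-1}w_k,
\]
\[
\frac{T_w(x_n)-T_w(x)}{x_n-x}=\sum_{k=0}^{n_0-1}w_k-\sum_{k=n_0}^{n-1}w_k.
\]

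Since $y_n\to x^+$ and $x_n\to x^-$, these specialisations bound the Dini derivatives:
\[
d_+T_w(x)\le\liminf_n\sum_{k=n_0}^{n-1}w_k-\sum_{k=0}^{n_0-1}w_k,\qquad D^-T_w(x)\ge\sum_{k=0}^{n_0-1}w_k-\liminf_n\sum_{k=n_0}^{n-1}w_k.
\]
The assumption $\partial T_w(x)\neq\emptyset$ is $D^-T_w(x)\le d_+T_w(x)$; adding the two inequalities above and dividing by $2$ yields $\sum_{k=0}^{n_0-1}w_k\le\liminf_n\sum_{k=n_0}^{n-1}w_k$, which is exactly the claim (using the trivial identity $\liminf_n\sum_{k=n_0}^{n-1}w_k=\liminf_n\sum_{k=n_0}^{n}w_k$).

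The main obstacle is the bookkeeping required to establish the three case identities for $g_k(y_n)$ and $g_k(x_n)$, particularly making sure, for $k<n_0$, that $y_n$ and $x_n$ eventually lie in the same interval $(a_k,b_k)\in\mathcal{F}_k$ as $x$, and, for $n_0\le k<n$, that they lie on the correct side of the midpoint separating $x$ from its nearest $D_k$-neighbour. Both reductions follow from the comparison $\alpha_n\le\rho\alpha_k/2$ for $n>k$, which is the iterated form of the hypothesis $\alpha_{j+1}\le\rho\alpha_j/2$.
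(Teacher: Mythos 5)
Your proof is correct and follows essentially the same route as the paper: the paper's proof simply asserts that it is ``routine to check'' that both $d_+T_w(x)$ and $-D^-T_w(x)$ are bounded above by $\liminf_n \sum_{k=n_0}^nw_k-\sum_{k=0}^{n_0-1}w_k$ and then concludes from $D^-T_w(x)\leq d_+T_w(x)$, and your computation of the difference quotients along $y_n$ and $x_n$ (tent contribution $-1$ from each level $k<n_0$, contribution $+1$ from $n_0\leq k<n$, zero beyond) is exactly that routine verification, carried out in the spirit of Proposition \ref{subdiferencialnovacia}.
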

\begin{proof}
It is routine to check that both $d_+T_w(x)$ and $-D^-T_w(x)$ are less than or equal to
$$
\liminf_n \sum_{k=n_0}^nw_k-\sum_{k=0}^{n_0-1}w_k,
$$
hence it is necessarily  greater than or equal to $0$ provided that $\partial T_w(x)\neq \emptyset$.
\end{proof}

Similarly,

\begin{prop}\label{supxenDyDtilde}
Assume that $\alpha _{n+1}\leq \frac{\rho \alpha _n}{2}$ for every $n$.
If $x\in D_{n_0}$, $x\in \widetilde{D}_k$ for every $k<n_0$, and  $\partial ^+T_w(x)\neq \emptyset$, then
$$
\limsup_n \sum_{k=n_0}^nw_k-\sum_{k=0}^{n_0-1}w_k\leq 0.
$$
\end{prop}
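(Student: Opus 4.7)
The plan is to deduce this result from Proposition \ref{subxenDyDtilde} by the standard duality between subdifferential and superdifferential, just as Proposition \ref{supxenDyDtilde} mirrors Proposition \ref{subxenDyDtilde}; indeed, the word ``Similarly'' in the paper signals precisely such an argument. Recall from the introduction that $\partial^+ f(x) = -\partial(-f)(x)$, so the hypothesis $\partial^+ T_w(x)\neq \emptyset$ is equivalent to $\partial(-T_w)(x)\neq\emptyset$.

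First I would observe that
$$
-T_w(z) = \sum_{n=0}^{\infty}(-w_n) g_n(z)
$$
is itself a function of the Generalized Takagi Class, associated with exactly the same decomposition but with weight sequence $(-w_n)_n$. All the hypotheses of Proposition \ref{subxenDyDtilde} depend only on the decomposition (the bound $\alpha_{n+1}\le \frac{\rho\alpha_n}{2}$, the fact that $x\in D_{n_0}$ and $x\in \widetilde{D}_k$ for every $k<n_0$), none of them on the signs or magnitudes of the weights, so they are inherited by $-T_w$ without change.

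Next I would apply Proposition \ref{subxenDyDtilde} to $-T_w$ with weight sequence $(-w_n)_n$. Since $\partial(-T_w)(x)\neq \emptyset$, the conclusion of that proposition reads
$$
\liminf_n \left(\sum_{k=n_0}^{n}(-w_k) - \sum_{k=0}^{n_0-1}(-w_k)\right) \geq 0,
$$
and using the identity $\liminf_n (-a_n) = -\limsup_n a_n$ this immediately rewrites as
$$
\limsup_n \sum_{k=n_0}^{n}w_k - \sum_{k=0}^{n_0-1}w_k \leq 0,
$$
which is the desired inequality.

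There is essentially no obstacle: the proof is a one-line reduction once Proposition \ref{subxenDyDtilde} is available. The only point worth checking is that passing from $T_w$ to $-T_w$ preserves the series representation (so the same midpoint calculations that underlie the previous proposition apply verbatim), which is immediate from the linearity of the sum in the weights.
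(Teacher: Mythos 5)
Your proof is correct and is exactly the argument the paper intends: the paper omits the proof with the word ``Similarly,'' and the duality reduction $\partial^+T_w(x)=-\partial(-T_w)(x)$ applied to $-T_w=\sum_n(-w_n)g_n$ is precisely the argument it spells out for the analogous earlier pair of propositions. No gaps.
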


These results allow us  to immediately deduce the non derivability of $T_w$. Anyway, observe  
that this result is weaker than Theorem \ref{mainD}.

\begin{cor}\label{xenD}
Assume that $\alpha _{n+1}\leq \frac{\rho \alpha _n}{2}$ for every $n$.
If $w\notin c_0$ and $x\in D$, then $T_w$ is not derivable
at $x$.
\end{cor}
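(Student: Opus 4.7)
The plan is to apply Propositions \ref{subxenDyDtilde} and \ref{supxenDyDtilde} simultaneously: derivability of $T_w$ at $x$ forces both $\partial T_w(x)$ and $\partial^+T_w(x)$ to be nonempty, and the two resulting inequalities will pinch the partial sums $\sum_{k=n_0}^{n} w_k$ to a finite limit, forcing $w_k\to 0$ and contradicting $w\notin c_0$.

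First I would carry out the reduction described in the paragraph preceding Proposition \ref{subxenDyDtilde}. Given $x\in D$, set $n_0:=\min\{n:x\in D_n\}$ and split
\[
T_w \;=\; \sum_{\substack{k<n_0\\ x\notin\widetilde{D}_k}} w_k g_k \;+\; \sum_{\substack{k<n_0\\ x\in\widetilde{D}_k}} w_k g_k \;+\; \sum_{k=n_0}^{\infty} w_k g_k.
\]
Each $g_k$ with $k<n_0$ and $x\notin\widetilde{D}_k$ is affine on a neighbourhood of $x$, so the first sum is $C^1$ at $x$ and does not affect derivability there. Absorbing it amounts to replacing $w$ by a sequence $w'$ that differs from $w$ only in those finitely many indices, so $w'\notin c_0$ as well, and we are placed under the hypotheses of Propositions \ref{subxenDyDtilde} and \ref{supxenDyDtilde}.

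Suppose, towards a contradiction, that $T_w$ is derivable at $x$. Then $\partial T_{w'}(x)=\partial^+T_{w'}(x)\neq\emptyset$, and the two propositions give
\[
\liminf_n \sum_{k=n_0}^{n} w'_k \;\geq\; \sum_{k=0}^{n_0-1} w'_k \;\geq\; \limsup_n \sum_{k=n_0}^{n} w'_k.
\]
Hence the partial sums $\sum_{k=n_0}^{n} w'_k$ converge as $n\to\infty$. Since $w'_k=w_k$ for $k\geq n_0$, this means the tail series $\sum_{k\geq n_0} w_k$ converges, so $w_k\to 0$, contradicting $w\notin c_0$.

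The only delicate point I anticipate is the reduction step: one must verify that stripping off the finitely many smooth summands does put us precisely in the setting of Propositions \ref{subxenDyDtilde}--\ref{supxenDyDtilde} and does not disturb the tail of the weight sequence. Once that bookkeeping is in place, the rest of the proof is a two-line pinching combined with the elementary fact that a convergent real series has vanishing terms.
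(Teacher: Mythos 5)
Your proposal is correct and follows essentially the same route as the paper: the paper's own (very terse) proof likewise observes that derivability forces $\partial T_w(x)=\partial^+T_w(x)\neq\emptyset$, applies Propositions \ref{subxenDyDtilde} and \ref{supxenDyDtilde} to pinch the partial sums $\sum_{k=n_0}^{n}w_k$, and concludes that the convergent tail contradicts $w\notin c_0$. Your explicit treatment of the reduction step (stripping off the finitely many summands that are smooth at $x$) is exactly the bookkeeping the paper performs in the paragraph preceding Proposition \ref{subxenDyDtilde}.
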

\begin{proof}
If $T'_w(x)$ exists then $\partial T_w(x)=\partial ^+T_w(x)\neq \emptyset$. Hence
$\sum_{n\geq n_0}w_n$ converges, which is impossible since $w\notin c_0$.
\end{proof}

\section{Nowhere derivability}

We begin with an example that shows the  necessity of a condition such as the existence
of $\rho$, in order to obtain positive results. We define the function on $[-1,1]$ instead of $[0,1]$
for the sake of simplicity.

\begin{example}\label{ejemplonoD}
Let us consider the sets $D_0^+=\{1\}$ and $D_1^+=\left \{\frac{2}{3},1\right \}$. For every integer $n\geq 1$ we define the sets 
$$
D_{2n}^+=\left \{\frac{k}{2^n}\in (0,1]: k\in\mathbb{Z}\right \}\cup D_{2n-1}^+
$$
and
$$
 D_{2n+1}^+=\left \{\frac{k}{2^n}-\frac{1}{3^{n+1}}\in (0,1]: k\in\mathbb{Z}\right \} \cup D_{2n}^+.
$$
For all $n\geq 0$ we also define $D_n^-=\{-x:x\in D_n^+\}$ and we consider the set $D_n=D_n^+\cup D_n^-$. Let $w\notin c_0$ be defined as $w_{2n}=1$ and $w_{2n+1}=-1$ for every $n$. Then, $T_w$ is derivable at $0$ and $T'_w(0)=0$.
\end{example}
\begin{proof}
We may rewrite the function $T_w:[-1,1]\to\mathbb{R}$ as
$$
T_w(x)=\sum_{n=0}^{\infty}H_n(x)
$$
where $H_n(x) = w_{2n}g_{2n}(x) + w_{2n+1}g_{2n+1}(x)$. It is immediate that, for every $n$, $\left |H_{n}(x)\right |\leq \frac1{3^{n+1}}$ for all $x\in[-1,1]$ and $H_n(x)=\frac{1}{3^{n+1}}$ provided that $x\in [-\frac{1}{2^n}+\frac{1}{3^{n+1}},\frac{1}{2^n}-\frac{1}{3^{n+1}}]$. 
If $0<|h|<\frac12$ there exists an integer $n$ such that $\frac{1}{2^{n+1}}\leq |h|< \frac{1}{2^n}$ and as $H_k(0)=H_k(h)$ for all $ 0 \leq k\leq n-1$ we have
$$
\left|\frac{T_w(h)-T_w(0)}{h}\right|=\frac{1}{|h|}\left| \sum_{k=n}^{\infty}H_k(h)-\sum_{k=n}^{\infty} \frac{1}{3^{k+1}}\right|\leq 2^{n+2}\sum_{k=n}^{\infty} \frac{1}{3^{k+1}}=2\left (\frac{2}{3}\right )^n.
$$
Letting $h$ to zero and therefore, $n$ to infinity, we obtain that $T'(0)=0$.
\end{proof}

\begin{example}\label{ejemploD}
If we modify Example \ref{ejemplonoD}, defining $D_0^+=\{0,1\}$,
we also have that $T_w$ is  derivable at $0\in D$.
\end{example}

Now we assume that $x\notin D$. In this section  we will denote $a_n=\max \{ y\in D_n: y<x\}$, 
$b_n=\min \{ y\in D_n: x<y\}$ and $c_n$ the midpoint of $(a_n,b_n)$.

The problem in the general situation is 
that we cannot control the position of the midpoints $c_k$ with respect to $(a_n,b_n)$ for $k<n$.
This forces us to require extra restrictions on the decomposition, however they are quite mild and 
allow us to generalize 
Kono's result for the Takagi Class (see \cite{Kono}), since the natural
decomposition of the dyadic numbers satisfy $\widetilde{D}_n\subset D_{n+1}$.  

\begin{thm}\label{rpar}
Assume that $w\notin c_0$. If the decomposition satisfies  for every $n$ that either
\begin{enumerate}
  \item  $\widetilde{D}_n\subset D_{n+1}$, or 
  \item  $I\cap D_{n+1}\neq \emptyset$ for every $I\in \mathcal{F}_n$ and $\widetilde{D}_n\subset \widetilde{D}_{n+1}$,
\end{enumerate}
then $T_w$ is nowhere derivable.
\end{thm}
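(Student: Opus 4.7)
My plan is to reduce to the case $x \notin D$ (the case $x \in D$ is already covered by Proposition~\ref{sencillo}) and argue by contradiction: assume $T_w'(x)$ exists and derive $w \in c_0$. For each $n$, let $(a_n,b_n) \in \mathcal{F}_n$ be the component containing $x$, write $c_n$ for its midpoint and $L_n = (b_n - a_n)/2$. By Lemma~\ref{lema1}(1), the symmetric secant
\[
S_n := \frac{T_w(b_n) - T_w(a_n)}{b_n - a_n}
\]
converges to $T_w'(x)$. Expanding via the tent shape of each $g_k$ (and using that the tail $k \geq n$ vanishes because $a_n, b_n \in D_k$), one writes $S_n = \sum_{k<n} \alpha_k^{(n)} w_k$ with coefficients $\alpha_k^{(n)} \in \{-1, 0, +1\}$.

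The key structural claim is that for $k<n$, $\alpha_k^{(n)} = 0$ if and only if $c_k = c_n$. Indeed, if case (1) holds at some intermediate step in $[k, n-1]$, the midpoint $c_k$ is propagated into $D_n$ and hence lies outside $(a_n,b_n)$; otherwise $c_k$ stays in $\widetilde{D}_n$, and since $(a_n,b_n)$ is a single $\mathcal{F}_n$-component, the only way $c_k$ can sit inside it is to coincide with its midpoint $c_n$. Setting $K_n := \{k \leq n : c_k = c_n\}$, one checks that $K_n$ is a contiguous interval $[k_0(n), n]$. A direct bookkeeping then yields $S_{n+1} - S_n = 0$ whenever $c_{n+1} = c_n$ (a ``non-escape'' step, which can occur only in case (2), with $x$ in the central $\mathcal{F}_{n+1}$-component $J_n$ of midpoint $c_n$), and $S_{n+1} - S_n = \epsilon_n \sum_{k \in K_n} w_k$ with $\epsilon_n \in \{\pm 1\}$ at every ``escape'' step. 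Cauchyness of $S_n$ therefore forces $\sum_{k \in K_n} w_k \to 0$ along escape indices.

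To recover the same limit along non-escape indices I use the asymmetric secant $R_n := (T_w(b_n) - T_w(b_{n+1}))/(b_n - b_{n+1})$, where $b_{n+1}$ is the right endpoint of $J_n$, at distance $\ell_n = (b_{n+1} - a_{n+1})/2$ from $c_n$. Since $b_n$ and $b_{n+1}$ are distinct points of $D_{n+1}$, condition (4) gives $b_n - b_{n+1} \geq \rho\,\alpha_{n+1}$; also $2\ell_n \leq \alpha_{n+1}$ by condition (3), so
\[
\frac{b_{n+1}-x}{b_n - b_{n+1}} \leq \frac{2\ell_n}{b_n - b_{n+1}} \leq \frac{1}{\rho}
\]
is bounded. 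Lemma~\ref{lema1}(2) then yields $R_n \to T_w'^+(x) = T_w'(x)$ along non-escape indices. A tent computation entirely analogous to the one for $S_n$ shows $R_n = S_n - \sum_{k \in K_n} w_k$, so $\sum_{k \in K_n} w_k \to 0$ along non-escape indices as well.

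Combining, $\sum_{k \in K_n} w_k \to 0$ unrestrictedly as $n \to \infty$. The contiguity of $K_n$ ending at $n$ now gives $w_n \to 0$: if $c_n \neq c_{n-1}$ then $K_n = \{n\}$ and $w_n = \sum_{k \in K_n} w_k \to 0$ directly; otherwise $K_n = K_{n-1} \cup \{n\}$ and $w_n = \sum_{k \in K_n} w_k - \sum_{k \in K_{n-1}} w_k \to 0$ by telescoping. Thus $w \in c_0$, contradicting the hypothesis. The hard part of this plan will be the two tent computations yielding $S_{n+1} - S_n = \epsilon_n \sum_{k \in K_n} w_k$ and $R_n = S_n - \sum_{k \in K_n} w_k$: both require a careful case analysis of where each past midpoint $c_k$ sits relative to the nested intervals $(a_{n+1},b_{n+1}) \subset (a_n,b_n)$ and of how the alternative (1) vs.\ (2) propagates $c_k$ into $D_n$ or keeps it in $\widetilde{D}_n$.
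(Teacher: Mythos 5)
Your proposal is correct, and it is built from the same ingredients as the paper's proof: the reduction to $x\notin D$ via Proposition~\ref{sencillo}, the structural fact that hypotheses (1)--(2) force $c_k\in(a_n,b_n)\Rightarrow c_k=c_{k+1}=\dots=c_n$, the symmetric secant over $(a_n,b_n)$, and a one-sided secant between consecutive endpoints controlled through Lemma~\ref{lema1} by the bound $\frac{b_{n+1}-x}{b_n-b_{n+1}}\le\frac{1}{\rho}$ coming from condition (4). The difference is in the bookkeeping. The paper chooses, for each $n$, a \emph{single} secant --- the symmetric one if no past midpoint lies in $(a_n,b_n)$, otherwise the one-sided secant taken on the same side of $c_n$ as $x$ --- whose value is exactly the partial sum $\sum_{k<n}w_kg_k'(x)$; convergence of these secants to $T_w'(x)$ then gives convergence of $\sum_kw_kg_k'(x)$ and hence $w\in c_0$ immediately, with $x\in\widetilde D$ treated as a separate, simpler case. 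You instead keep the symmetric secant $S_n$ throughout, accept that it annihilates the coefficients indexed by $K_n$, recover the block sum $\sum_{k\in K_n}w_k$ from $S_{n+1}-S_n$ at escape steps and from $S_n-R_n$ at non-escape steps, and extract $w_n\to0$ by telescoping; a side benefit is that the case $x\in\widetilde D$ (where every step is eventually non-escape) is absorbed into the general argument. I checked the two ``tent computations'' you defer and they do hold: since $c_k\in(a_n,b_n)$ forces $c_k=c_n$, the coefficient of $w_k$ in $S_n$ is $g_k'(x)$ for $k\notin K_n$ and $0$ for $k\in K_n$, while in $R_n$ it is $g_k'(x)$ for $k\notin K_n$ and $-1$ for $k\in K_n$ (because $c_k=c_n<b_{n+1}<b_n$), which yields both identities; note also that your single right-sided $R_n$ suffices precisely because its $K_n$-coefficients do not depend on which side of $c_n$ the point $x$ lies, whereas the paper must switch between the $a$-side and $b$-side secants to make the coefficient equal $g_k'(x)$. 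The only points you leave implicit are that $b_{n+1}<b_n$ at a non-escape step (true: such a step occurs only under hypothesis (2), and a strictly smaller interval with the same midpoint must move both endpoints) and that $\epsilon_n$ is well defined at escape steps (true: if $x=c_n$ the step is non-escape). Your route is somewhat longer, but equally valid.
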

\begin{proof}
It is enough to prove that $T'_w(x)$ does not exist for every $x\notin D$ since  Proposition \ref{sencillo}
gives us the result when $x\in D$.
If $x\in \widetilde{D}$ we may assume without loss of generality that $x\in \widetilde{D}_1$ and consequently that 
$x\in \widetilde{D}_n$ for every $n$, since otherwise it would belong to $D$. If this is the case, then
$x$ is the midpoint of $(a_n,b_n)$ for every $n$, which implies that $a_{n}<a_{n+1}<x<b_{n+1}<b_{n}$ for every
$n$ since  $I\cap D_{n+1}\neq \emptyset$ for every $I\in \mathcal{F}_n$. Hence, if $T'_w(x)$ exists then
$$
\frac{T_w(b_{n+1})-T_w(b_n)}{b_{n+1}-b_n}=\sum_{k=1}^nw_k\frac{g_k(b_{n+1})-g_k(b_n)}{b_{n+1}-b_n}
=-\sum_{k=1}^nw_k,
$$
and consequently, we have that
$$
-\lim_n\sum_{k=1}^{n}w_k=T'_w(x)
$$
by Lemma \ref{lema1} since 
$$
\frac{b_{n+1}-x}{b_n-b_{n+1}}\leq \frac{\alpha _{n+1}}{\rho \alpha _{n+1}}=\frac{1}{\rho} .
$$
This implies that $\sum w_k$ converges, which is impossible.

Therefore, we may assume that $x\notin \widetilde{D}$. For every $n$, we have
that if $c_k\notin (a_n,b_n)$ for every $k<n$ then
$$
\frac{T_w(b_n)-T_w(a_n)}{b_n-a_n}=\sum_{k=1}^{n-1}w_k\frac{g_k(b_n)-g_k(a_n)}{b_n-a_n}
=\sum_{k=1}^{n-1}w_kg'_k(x).
$$

Alternatively, if $c_k\in (a_n,b_n)$ for some $k<n$ then $c_k=c_{k+1}=\dots =c_n$, since we are under hypotheses $(2)$ necessarily,
this implies in particular that 
$$
a_k<a_{k+1}<\dots <a_n<b_n<\dots <b_{k+1}<b_k.
$$
If $c_n<x$ then
$$
\frac{T_w(b_n)-T_w(b_{n-1})}{b_n-b_{n-1}}=\sum_{k=1}^{n-1}w_k\frac{g_k(b_n)-g_k(b_{n-1})}{b_n-b_{n-1}}
=\sum_{k=1}^{n-1}w_kg'_k(x)
$$
and 
$$
\frac{b_n-x}{b_{n-1}-b_n}\leq \frac{\alpha _n}{\rho \alpha _n}=\frac{1}{\rho},
$$
meanwhile if $x<c_n$ then
$$
\frac{T_w(a_n)-T_w(a_{n-1})}{a_n-a_{n-1}}=\sum_{k=1}^{n-1}w_k\frac{g_k(a_n)-g_k(a_{n-1})}{a_n-a_{n-1}}
=\sum_{k=1}^{n-1}w_kg'_k(x)
$$
and
$$
\frac{x-a_{n-1}}{a_{n}-a_{n-1}}=\frac{x-a_n}{a_{n}-a_{n-1}}+1
\leq \frac{\alpha _n}{\rho \alpha _n}+1=\frac{1}{\rho}+1.
$$
If $T'_w(x)$ exists, then taking $u_n=b_n$ or $a_n$, and $v_n=a_n,b_{n-1}$ or $a_{n-1}$ accordingly,
invoking Lemma \ref{lema1},
we have that 
$$
T'_w(x)=\lim_n \frac{T_w(u_n)-T_w(v_n)}{u_n-v_n}=\sum_{k=1}^{\infty}w_kg'_k(x)
$$
which is a contradiction.
\end{proof}

This theorem covers 
the Takagi-Van der Waerden Class that  we introduced in Example \ref{ejemplo1},
but it also includes a more general situation appearing in the following example. 

\begin{example}\label{ejemplo2}
Let us consider $\pmb{r}=(r_n)_n\subset \mathbb{N}$ an  strictly increasing sequence 
where $r_{n}$ divides $r_{n+1}$ for every $n$ and $r_1=1$. 
We define the function $f_{\pmb{r}}:[0,1]\to\mathbb{R}$ as follows
$$
f_{\pmb{r}}(x)=\sum_{n=1}^{\infty}\frac{1}{r_n}\phi(r_nx)
$$
If we define the functions $f_{\pmb{r},w}$ in the same way as in the Example \ref{ejemplo1}, we have that these functions are 
nowhere differentiable whenever $w\not\in c_0$.
\end{example}
\begin{proof}
It is immediate to see that $D_n=\{kr^{-1}_n\in [0,1]:k\in\mathbb{Z}\}, \alpha_n = \frac{1}{r_n}$ and $\rho = 1$. We may write $r_{n+1}=\beta_n r_{n}$. If $\beta_n$ is even then we have the situation $\widetilde{D}_n\subset D_{n+1}$, meanwhile if $\beta_n$ is odd we obtain that $\widetilde{D}_n\subset \widetilde{D}_{n+1}$. Thus we have a decomposition which satisfies the conditions described in Theorem \ref{rpar}.
\end{proof}

\begin{rmk}
If $\rho =1$ then there exists a $\pmb{r}$ as above, such that 
$T_w=f_{\pmb{r},w}$.
\end{rmk}

When the weights are nonnegative we do not require any restriction on the decomposition, let us see it.

\begin{thm}\label{positivosnoenD}
If $w\notin c_0$, $w_k\geq 0$ for every $k$, and $x\notin D$, then $\partial T_w(x)=\emptyset$.
\end{thm}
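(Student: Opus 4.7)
I argue by contradiction: suppose $\zeta \in \partial T_w(x)$ for some $\zeta \in \mathbb{R}$. The formula $\partial T_w(x) = [D^- T_w(x), d_+ T_w(x)] \cap \mathbb{R}$ recalled in the introduction yields $D^- T_w(x) \le \zeta \le d_+ T_w(x)$. For every $n$, let $(a_n, b_n) \in \mathcal{F}_n$ be the (unique) component containing $x$, which is well defined because $x \notin D$ forces $x \notin D_n$, and put
\[
L_n := \frac{T_w(a_n) - T_w(x)}{a_n - x}, \qquad R_n := \frac{T_w(b_n) - T_w(x)}{b_n - x}.
\]
Since $b_n - a_n \le \alpha_n \to 0$, $L_n$ and $R_n$ are, respectively, a left and a right difference quotient of $T_w$ at $x$ along steps tending to $0$; hence $\limsup_n L_n \le D^- T_w(x)$ and $\liminf_n R_n \ge d_+ T_w(x)$, whence $\limsup_n (L_n - R_n) \le 0$.

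The core of the argument is the opposite estimate $L_n - R_n \ge w_n$ for every $n$. Using $a_n, b_n \in D_n \subset D_k$, so that $g_k(a_n) = g_k(b_n) = 0$ for every $k \ge n$, one splits
\[
L_n - R_n = \sum_{k < n} w_k\, \Lambda_{k,n} + \sum_{k \ge n} w_k\, g_k(x)\, \Sigma_n,
\]
with $\Lambda_{k,n} := \frac{g_k(x) - g_k(a_n)}{x - a_n} - \frac{g_k(b_n) - g_k(x)}{b_n - x}$ and $\Sigma_n := \frac{1}{x - a_n} + \frac{1}{b_n - x}$. For $k < n$: since $D_k \subset D_n$ and $D_n \cap (a_n, b_n) = \emptyset$, the closed interval $[a_n, b_n]$ is contained in the closure of a single component of $\mathcal{F}_k$; on that closure $g_k$ is a concave tent function, so the concave chord-slope inequality gives $\Lambda_{k,n} \ge 0$, and $w_k \ge 0$ keeps the full first sum nonnegative. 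For the second sum, retain only the $k = n$ term and apply the elementary identity $\min(p, q)\bigl(\tfrac{1}{p} + \tfrac{1}{q}\bigr) \ge 1$, valid for $p, q > 0$, with $p = x - a_n$ and $q = b_n - x$: because $g_n(x) = \min(x - a_n, b_n - x)$, this yields $w_n\, g_n(x)\, \Sigma_n \ge w_n$, hence the second sum is $\ge w_n$.

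Combining the two bounds gives $\limsup_n w_n \le \limsup_n (L_n - R_n) \le 0$, and together with $w_k \ge 0$ this forces $w_n \to 0$, contradicting $w \notin c_0$. The delicate point is the choice of the symmetric second-order quantity $L_n - R_n$: its form is exactly what allows the positivity of the weights to combine with the concavity of each $g_k$ on the components of $\mathcal{F}_k$ to cancel the $k < n$ part in sign, while the tent at level $n$ alone supplies the clean floor $w_n$ thanks to $g_n(x) > 0$—which is precisely where the hypothesis $x \notin D$ enters.
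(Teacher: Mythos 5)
Your proof is correct and follows essentially the same route as the paper's: both test the Dini derivatives along the endpoints $a_n,b_n$ of the component of $\mathcal{F}_n$ containing $x$, kill the $k<n$ terms via concavity of the tent $g_k$ on $[a_k,b_k]$ together with $w_k\geq 0$, discard $k>n$ by positivity, and extract the contradiction from the level-$n$ tent. The only organizational difference is that you bound the single quantity $L_n-R_n$ below by $w_n$ directly, whereas the paper sandwiches $D^-T_w(x)$ and $d_+T_w(x)$ around the partial sums $\sum_{k\leq n}w_kg'_k(x)$ and contradicts the convergence of that series; your variant has the minor advantage of never invoking $g'_k(x)$, which is undefined when $x$ is the midpoint $c_k$ of its component of $\mathcal{F}_k$.
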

\begin{proof}
We denote $a_n<x<b_n$ satisfying $a_n,b_n\in D_n$, $(a_n,b_n)\in \mathcal{F}_n$
as usual. We have
\begin{align*}
&d_+T_w(x)=\liminf_{t\to 0^+}\frac{T_w(x+t)-T_w(x)}{t}\leq 
\liminf_n\frac{T_w(b_n)-T_w(x)}{b_n-x}\\
&\leq \liminf_n\sum_{k=1}^nw_k\frac{g_k(b_n)-g_k(x)}{b_n-x}\leq 
\liminf_n\sum_{k=1}^nw_kg'_k(x),
\end{align*}
since 
$$
\frac{g_k(b_n)-g_k(x)}{b_n-x}=g'_k(x)
$$
provided that $g'_k(x)=-1$. (Observe that this happens whenever $c_k<x$).
Similarly,
\begin{align*}
&D^-T_w(x)=\limsup_{t\to 0^-}\frac{T_w(x+t)-T_w(x)}{t}\geq 
\limsup_n\frac{T_w(a_n)-T_w(x)}{a_n-x}\\
&\geq \limsup_n\sum_{k=1}^nw_k\frac{g_k(a_n)-g_k(x)}{a_n-x}\geq 
\limsup_n\sum_{k=1}^nw_kg'_k(x).
\end{align*}
We conclude that $\partial T_w(x)\neq \emptyset$ implies
$$
\limsup_n\sum_{k=1}^nw_kg'_k(x)\leq D^-T_w(x)\leq d_+T_w(x)
\leq \liminf_n\sum_{k=1}^nw_kg'_k(x)
$$
and the convergence of the series $\sum w_kg'_k(x)$, which is
impossible since $w\notin c_0$. 
\end{proof}

\begin{cor}
If $w\notin c_0$ and $w_k\geq 0$ for every $k$, then $T_w$ is nowhere derivable.
\end{cor}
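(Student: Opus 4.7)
The plan is to dispatch the two cases $x \in D$ and $x \notin D$ separately, invoking Proposition \ref{positivosxenD} and Theorem \ref{positivosnoenD} respectively, and then use the general fact (recalled in the introduction) that $f$ is derivable at $x$ if and only if $\partial f(x) = \partial^+ f(x) \neq \emptyset$, in which case $\partial f(x) = \{f'(x)\}$ is a singleton.

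For $x \notin D$, Theorem \ref{positivosnoenD} yields directly $\partial T_w(x) = \emptyset$, so the subdifferential criterion forces non-derivability at $x$. For $x \in D$, Proposition \ref{positivosxenD} asserts (in fact in a stronger form, via the local minimum of $T_w(z)-\zeta z$ for every $\zeta$) that $\partial T_w(x) = \mathbb{R}$. Since $\mathbb{R}$ is not a singleton, $T_w$ cannot be derivable at $x$ either.

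There is no real obstacle here: the whole content is in Proposition \ref{positivosxenD} and Theorem \ref{positivosnoenD}, and the corollary is just the observation that both extremes of possible subdifferential behavior ($\emptyset$ and $\mathbb{R}$) preclude ordinary differentiability. The only subtlety worth flagging is that Proposition \ref{positivosxenD} was stated merely under $w \notin \ell^1$, so the $w \notin c_0$ hypothesis is more than enough for the $x \in D$ case, and no further argument is required.
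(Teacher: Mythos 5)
Your proof is correct and is exactly the argument the paper intends: the corollary is stated without proof precisely because it follows immediately from Proposition \ref{positivosxenD} ($\partial T_w(x)=\mathbb{R}$ for $x\in D$, hence not a singleton) and Theorem \ref{positivosnoenD} ($\partial T_w(x)=\emptyset$ for $x\notin D$), combined with the fact that derivability forces $\partial T_w(x)=\{T_w'(x)\}$. Your side remark is harmlessly imprecise (the proposition is stated under $w\notin c_0$, with $w\notin\ell^1$ noted afterwards as sufficient), but since $w_k\geq 0$ and $w\notin c_0$ imply $w\notin\ell^1$, nothing is affected.
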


The following example shows that Proposition \ref{positivosxenD} and Corollary \ref{positivosnoenD} are not longer true when considering arbitrary weights.
\begin{example}
Let us consider the function $f:[0,1]\to \mathbb{R}$ defined as $f(x)=-T(x)$ where $T$ denotes the Takagi function. From 
Proposition \ref{positivosxenD}, we have that if $x\in D$ then $\partial T(x)=\mathbb{R}$, and consequently
$$
\partial^+ f(x)=-\partial (-f)(x)=-\partial T(x)=\mathbb{R}.
$$
Hence, $\partial f(x)=\emptyset$. On the other hand, if $x\notin D$ and we consider its binary expansion
$$
x=\sum_{n=1}^{\infty}\frac{\varepsilon_n}{2^n}, \quad \varepsilon_n\in\{0,1\},
$$
we have the following result for the superdifferential of the Takagi function (see \cite{FGG2}, Theorem 2.6):
\begin{prop}
If $x\notin D$ and there exists $m\in\mathbb{Z}$, $m\geq 1$, such that $\varepsilon_n+\varepsilon_{n+1}=1$ for all $n>m$, then 
$$
\partial^+ T(x)=\left \{
\begin{array}{lc}
m-2\sum_{k=1}^m \varepsilon_k +[-1,0]&\text{if }\varepsilon_{m+1}=1,\\
&\\
m-2\sum_{k=1}^m \varepsilon_k +[0,1]&\text{if }\varepsilon_{m+1}=0.\\
\end{array}
\right .
$$
\end{prop}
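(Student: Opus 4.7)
The plan is to reduce, via the self-similar structure of the Takagi function, to computing $\partial^+ T$ at the two special points $1/3$ and $2/3$, and then to determine these by combining the maximum principle, the functional equation of $T$, and the symmetry $T(y) = T(1-y)$.

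Setting $P = m - 2\sum_{k=1}^m \varepsilon_k$ and $a_m = \sum_{k=1}^m \varepsilon_k 2^{-k} \in D_m$, the eventually alternating binary tail lets us write $x = a_m + 2^{-m}t$ with $t = 2/3$ if $\varepsilon_{m+1}=1$ and $t = 1/3$ if $\varepsilon_{m+1}=0$. On $[a_m,b_m]$ each $g_k$ with $k<m$ is affine with slope $(-1)^{\varepsilon_{k+1}}$, so the partial sum $T_m := \sum_{k=0}^{m-1} g_k$ has slope $\sum_{j=1}^m (-1)^{\varepsilon_j} = P$, while the remaining tail obeys the classical scaling identity $\sum_{k\ge m} g_k(y) = 2^{-m} T(2^m(y-a_m))$ (from the periodicity of $\phi$). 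For small $h$ with $x+h\in(a_m,b_m)$ this yields
$$\frac{T(x+h)-T(x)}{h} = P + \frac{T(t+2^m h)-T(t)}{2^m h},$$
so $D^+T(x)=P+D^+T(t)$, $d_-T(x)=P+d_-T(t)$, and hence $\partial^+ T(x) = P + \partial^+ T(t)$.

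It then suffices to show $\partial^+ T(1/3) = [0,1]$: the value at $2/3$ follows from the symmetry $T(y)=T(1-y)$, which swaps right- and left-sided Dini derivatives with a sign change, forcing $\partial^+ T(2/3) = -\partial^+ T(1/3) = [-1,0]$. First I would record the maximum bound $T\le 2/3$ on $[0,1]$, obtained e.g.\ by iterating $T(y)=\phi(y)+T(2y)/2$ twice and using $\max_y(\phi(y)+\phi(2y)/2)=1/2$ to get $T_{\max}\le 1/2 + T_{\max}/4$; together with $T(1/3)=T(2/3)=2/3$ this immediately gives $D^+T(1/3)\le 0$ and $d_-T(1/3)\ge 0$. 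For the matching reverse inequalities I would apply the reduction formula above to $1/3$ itself (noting $1/3 = 1/4 + (1/4)(1/3)$ and that the slope at level $2$ vanishes), obtaining
$$T(1/3+h)-T(1/3) = \tfrac{1}{4}\bigl[T(1/3+4h)-T(1/3)\bigr]$$
for all sufficiently small $h$; iterating from $T(2/3)=T(1/3)$ produces $T\bigl(1/3 + 1/(3\cdot 4^n)\bigr) = T(1/3)$ for every $n$, so $D^+ T(1/3) \ge 0$. For $d_-T(1/3)$ I would use the Takagi functional equation $T(z/2) = z/2 + T(z)/2$ at $z = 2/3+2h$, which gives
$$\frac{T(1/3+h)-T(1/3)}{h} = 1 + \frac{T(2/3+2h)-T(2/3)}{2h};$$
taking $\liminf$ as $h\to 0^-$ yields $d_-T(1/3) = 1 + d_-T(2/3) \ge 1$ (with $d_-T(2/3)\ge 0$ again from the maximum), while the matching upper bound $d_-T(1/3)\le 1$ comes from the sequence $h_n = -1/(6\cdot 4^n)$, along which the ratio equals $1$ for every $n$ (by the self-similar identity combined with the base computation $T(7/24) = 5/8$, itself obtained by two applications of the functional equation using $T(1/6)=1/2$ and $T(5/12)=2/3$).

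Putting everything together, for $\varepsilon_{m+1}=1$ we obtain $\partial^+ T(x) = P + [-1,0]$ and for $\varepsilon_{m+1}=0$, $\partial^+ T(x) = P + [0,1]$, which is the statement of the proposition. The hard part will be the base case: one must realise both endpoints of $\partial^+ T(1/3)$ with explicit self-similar sequences that fall inside the rather narrow range of validity of the level-two recursion, and one also needs the ``crossed'' identity $d_-T(1/3) = 1 + d_-T(2/3)$ supplied by the Takagi functional equation, since the max bound $T\le 2/3$ alone gives only one-sided information.
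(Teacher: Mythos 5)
Your argument is essentially correct, but there is nothing in this paper to compare it against: the proposition is not proved here at all. It is quoted verbatim from reference \cite{FGG2} (Theorem 2.6 there) and used only as a black box to exhibit a point with $\partial f(x)=[1,2]$ for $f=-T$. So you have supplied a self-contained proof where the paper supplies a citation. Checking your steps: the reduction $\partial^+T(x)=P+\partial^+T(t)$ with $P=m-2\sum_{k=1}^m\varepsilon_k$ and $t\in\{1/3,2/3\}$ is sound (each $g_k$, $k<m$, is affine on the dyadic interval $[a_m,a_m+2^{-m}]$ with slope $(-1)^{\varepsilon_{k+1}}$, and the tail rescales exactly); the bound $T\le 2/3$ via two iterations of the functional equation is correct, and together with $T(1/3)=T(2/3)=2/3$ (which you should verify explicitly, though it is one line from $\phi(2^n/3)=1/3$) it gives $D^+T(1/3)\le 0$ and $d_-T(2/3)\ge 0$; the level-two recursion $T(1/3+h)-T(1/3)=\tfrac14\bigl[T(1/3+4h)-T(1/3)\bigr]$ is valid precisely for $1/3+h\in(1/4,1/2)$, and your sequences $h_n=1/(3\cdot 4^n)$ and $h_n=-1/(6\cdot 4^n)$ stay in that range for $n\ge 1$, with the bases $T(2/3)=T(1/3)$ and $T(7/24)=5/8$ both correct (the latter computation is genuinely needed, as you note, since $h_0=-1/6$ falls outside the recursion's domain). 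This yields $\partial^+T(1/3)=[D^+T(1/3),d_-T(1/3)]=[0,1]$, and the symmetry $T(y)=T(1-y)$ gives $\partial^+T(2/3)=[-1,0]$. One small point worth recording in a write-up: the stated formula must be independent of the choice of admissible $m$, and indeed $P_{m+1}=P_m+1-2\varepsilon_{m+1}$ makes the two cases consistent, which your reduction respects automatically.
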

In view of this result, it is enough to consider the point $x=11010\overline{10}$, and consequently, $\partial f(x)=[1,2]$.
\end{example}

\end{document}